\newtheorem{theorem}{Theorem}[section]
\newtheorem{corollary}{Corollary}[section]
\newtheorem{definition}{Definition}[section]
\newtheorem{lemma}[theorem]{Lemma}
\theoremstyle{remark}
\newtheorem*{remark}{Remark}
\title{Poisson Vertex Cohomology and Tate Lie Algebroids}
\author{Emile Bouaziz}
\begin{document}\begin{abstract} We study sheaves on holomorphic spaces of loops and apply this to the study of the complex, defined in \cite{BdSHK}, governing deformations of the \emph{Poisson vertex algebra} structure on the space of holomorphic loops into a Poisson variety. We describe this complex in terms of the (continuous) de Rham-Lie cohomology of an associated Lie algebroid object in locally linearly compact topological (alias \emph{Tate}) sheaves of modules on $\mathcal{L}^{+}M$. In particular this allows us to easily compute the cohomology of the above in the case where $\pi$ is symplectic - we obtain de Rham cohomology of $M$. \end{abstract}
\maketitle

\section{Introduction}

If $A_{\hbar}$ is an associative algebras over $k[[\hbar]]$ with the commutators vanishing to order $\hbar$, then the $\hbar\rightarrow0$ limit of the family inherits the structure of a Poisson algebra. The appropriate \emph{chiral} version of this statement says that a vertex algebra $V_{\hbar}$, whose underlying Lie-conformal algebra is commutative to order $\hbar$ (so that Laurent terms in the OPEs of $V_{\hbar}$ vanish to order $\hbar$) is naturally a \emph{Poisson vertex algebra}, that is to say a commutative vertex algebra equipped with a sort of Poisson bracket. In fact somewhat more is true, any vertex algebra is \emph{canonically filtered}, and the associated graded with respect to this filtration is a Poisson vertex algebra.  \\ \\ If one wishes to study vertex algebras (their representation theory, deformation theory etc) a useful first approximation is then to study the somewhat easier to understand Poisson vertex algebras (PVAs henceforth). Notice in particular that the deformation theory of a PVA should encode whether or not it can arise interestingly as the degeneration of a family of vertex algebras. The deformation theory of PVAs (and vertex algebras) has recently been investigated in cohomological and operadic terms in the work of Bakalov, de Sole, Heluani and Kac (\cite{BdSHK}), which we will use as our reference for the theory of PVAs more broadly. \\ \\ We will concern ourselves in ths note with the Poisson geometry of a smooth Poisson variety $(M,\pi)$. It is natural to assume that the Poisson structure on $M$ should correspond to a PV structure on some larger space associated to $M$ (that is to say a larger space whose structure sheaf is naturally a sheaf of PVAs). Perhaps the most natural guess is the space $\mathcal{L}^{+}M$ of holomorphic loops into $M$, ie the space with $R$-points $M(R[[z]])$. That this indeed inherits a natural PV structure has been shown by Arakawa \cite{A}. It is this space and its PV geometry with which we deal primarily. \\ \\ We begin by studying the theory of certain sheaves on the space of \emph{arcs} (aliases \emph{jets} or \emph{holomorphic loops}) $\mathcal{L}^{+}M$, for a scheme $M$. We emphasize throughout that the constructions can be characterized universally if we work in a category of suitably decorated spaces, namely $\delta$-spaces. We make use of these universal constructions to show that Lie algebroids on $M$ correspond naturally to certain topological Lie algebroids on $\mathcal{L}^{+}M$. In the case of a Poisson variety $(M,\pi)$ we can encode $\pi$ as a Lie algebroid structure on $\Omega^{1}_{M}$. We will then see that the associated topological Lie algebroid on $\mathcal{L}^{+}M$ governs the PV deformation theory, which allows us to explictly compute this deformation theory in the case that $\pi$ is non-degenerate. More precisely we will see that the following holds, \begin{theorem} Let $(M,\pi)$ be a smooth Poisson variety with non-degenerate $\pi$. Then there is an isomorphism: $$H^{*}_{PV}(\mathcal{L}^{+}M,\mathcal{O})\cong H^{*}_{dR}(M).$$\end{theorem}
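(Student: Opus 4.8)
The plan is to combine the structural identification promised in the introduction---that $H^*_{PV}(\mathcal{L}^+M,\mathcal{O})$ computes the continuous de Rham--Lie cohomology of the Tate Lie algebroid on $\mathcal{L}^+M$ attached to the cotangent Lie algebroid of $(M,\pi)$---with the special feature of the non-degenerate case, namely that the cotangent algebroid is isomorphic to the tangent algebroid. Concretely, I would first recall that for any Poisson variety the bivector $\pi$ endows $\Omega^1_M$ with the Koszul bracket $[\alpha,\beta]_\pi = L_{\pi^\sharp\alpha}\beta - L_{\pi^\sharp\beta}\alpha - d\,\pi(\alpha,\beta)$ and anchor $\pi^\sharp:\Omega^1_M\to T_M$, making it a Lie algebroid. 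By the correspondence between Lie algebroids on $M$ and Tate Lie algebroids on $\mathcal{L}^+M$ established earlier, this produces a Tate Lie algebroid $\mathcal{A}_\pi$ on $\mathcal{L}^+M$, and the promised identification reads $H^*_{PV}(\mathcal{L}^+M,\mathcal{O})\cong H^*(\mathcal{A}_\pi)$, the right-hand side denoting the continuous de Rham--Lie cohomology of $\mathcal{A}_\pi$.

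Next I would exploit non-degeneracy. When $\pi$ is non-degenerate the anchor $\pi^\sharp$ is an isomorphism of sheaves, and a standard check shows it upgrades to an isomorphism of Lie algebroids $\pi^\sharp:\Omega^1_M\xrightarrow{\sim}T_M$, where $T_M$ carries its commutator bracket and identity anchor; the point is that $\pi^\sharp$ transports the Koszul bracket to the Lie bracket precisely because the inverse bivector is the closed symplectic form $\omega$. Since the passage from Lie algebroids on $M$ to Tate Lie algebroids on $\mathcal{L}^+M$ is functorial, this induces an isomorphism $\mathcal{A}_\pi\cong\mathcal{T}$ of Tate Lie algebroids, with $\mathcal{T}$ the algebroid attached to $T_M$. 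By construction $\mathcal{T}$ is the continuous tangent algebroid of $\mathcal{L}^+M$, so $H^*(\mathcal{A}_\pi)\cong H^*(\mathcal{T})$ is the continuous de Rham cohomology of the arc space.

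The remaining, and I expect genuinely delicate, step is to identify the continuous de Rham cohomology of $\mathcal{L}^+M$ with $H^*_{dR}(M)$. Here I would use the evaluation projection $p:\mathcal{L}^+M\to M$ sending a loop to its value at $z=0$, whose fibres are pro-finite-dimensional affine spaces coordinatized by the positive Taylor modes, and argue that $p$ induces the desired isomorphism. The substance is an infinite-dimensional Poincaré lemma: one must show that the relative continuous de Rham complex along the loop directions resolves $\mathcal{O}_M$, that is, that the fibrewise complex is acyclic. This is exactly where the Tate (locally linearly compact) structure is indispensable---a naive infinite product over the loop coordinates need not be acyclic, whereas the linearly compact topology makes the fibrewise cochain homotopy converge. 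I would construct this homotopy mode-by-mode, contracting each loop coordinate by the usual integration operator and checking continuity, then assemble the contributions using the Tate structure together with a spectral sequence (or a direct telescoping) for the defining pro-system. Granting this fibrewise acyclicity, homotopy invariance along $p$ yields $H^*(\mathcal{T})\cong H^*_{dR}(M)$, and chaining the displayed isomorphisms proves the theorem. The main obstacle is thus entirely the convergence and continuity of the Poincaré homotopy in the Tate setting; the algebraic identifications on either side are essentially formal.
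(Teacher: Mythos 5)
Your middle step---non-degeneracy upgrades $\pi^{\sharp}$ to an isomorphism of Lie algebroids $\Omega^{1}_{M}\rightarrow\Theta_{M}$, hence an isomorphism of the associated Tate algebroids $\int\Omega^{1}_{M}\rightarrow\int\Theta_{M}\cong\Theta_{\mathcal{L}^{+}M}$---is exactly the paper's first move. But your opening identification is off in a way that propagates. The paper's key lemma does \emph{not} identify $C^{*}_{PV}(\mathcal{L}^{+}M,\mathcal{O})$ with the continuous de Rham--Lie complex $C^{*,cont}_{dR,Lie}(\mathcal{L}^{+}M,\int\Omega^{1}_{M})$; it identifies it with the \emph{$\delta$-reduced} complex $C^{*,cont}_{dR,Lie}(\mathcal{L}^{+}M,\int\Omega^{1}_{M})/\delta$. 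The reduction is forced by the very definition of PV cochains: an $n$-cochain takes values in $B[\lambda_{1},\dots,\lambda_{n}]/(\delta+\sum_{i}\lambda_{i})$, and after sesquilinearity reduces a cochain to its restriction to $A^{\otimes n}$, that quotient becomes precisely the quotient by the action of $\delta$. So in the non-degenerate case the complex you must compute is not $\Omega^{*}(\mathcal{L}^{+}M)$ but $\Omega^{*}(\mathcal{L}^{+}M)/Lie_{\delta}$.

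This breaks your final step. Your fibrewise Poincar\'e lemma along $p:\mathcal{L}^{+}M\rightarrow M$ (which, incidentally, needs no Tate-theoretic convergence: the algebraic de Rham complex of $\mathcal{L}^{+}M$ is the filtered colimit of those of the jet schemes $\mathcal{L}^{+}_{n}M$, each an affine bundle over $M$, so ordinary homotopy invariance plus exactness of filtered colimits already gives $H^{*}_{dR}(\mathcal{L}^{+}M)\cong H^{*}_{dR}(M)$) computes the cohomology of the \emph{unreduced} complex. It says nothing about the quotient by $Lie_{\delta}$, and your mode-by-mode contraction does not descend to that quotient: the contraction operators along the loop coordinates fail to commute with $Lie_{\delta}$, since $\delta$ mixes base and fibre directions ($\delta a_{0}=a_{1}$). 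The paper's replacement for this step is a conformal-grading argument: the $\mathbb{G}_{m}$-action gives an Euler field $\eta$ satisfying $[Lie_{\eta},Lie_{\delta}]=Lie_{\delta}$, so the weight decomposition descends to $\Omega^{*}(\mathcal{L}^{+}M)/Lie_{\delta}$; the Cartan formula $[d_{dR},\iota_{\eta}]=Lie_{\eta}$ kills all nonzero weights, and since the image of $Lie_{\delta}$ sits in strictly positive weight, the weight-zero part of the quotient is exactly $\Omega^{*}(M)$. That argument is built precisely to survive the $\delta$-reduction, which is the feature your proposal omits; the fact that the reduced and unreduced complexes end up with the same cohomology is a consequence of the theorem, not a substitute for proving it.
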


\section{acknowledgements}  We have benefited from numerous discussions regarding vertex deformations with Ian Grojnowski, Vasilli Gorbounov and Owen Gwilliam. We wish especially to thank Reimundo Heluani for answering a relevant Mathoverflow question of the author, as well as for his very helpful remarks on a draft of this note.

\section{Holomorphic Loop Spaces and $\delta$-Schemes}\subsection{Definition of Loops} We give a very brief introduction to the basic theory of spaces of holomorphic loops, emphasizing the canonical global vector field. We make no claim to originiality here. Further, we make no mention of Poisson vertex algebras in this section. For more detailed references we refer the reader to \cite{DF}. We work throughout over an algebraically closed field of characteristic $0$, which we denote $k$. One is supposed to imagine that $\mathcal{L}^{+}X:=Maps(D_{z},X)$ where $D_{z}$ is a formal disc with the property that $D_{z}\times spec(A)=spec(A[[z]])$. \begin{remark}Note that some subtleties arise with this heuristic as it is not the case that $A\otimes k[[z]]=A[[z]]$ in general. Even working with the honest formal spectrum $spf k[[z]]$ one would have to prove that the definition given below agrees with $Maps(spf k[[z]],-)$, that is to say that for suitable $X$ we have $X(R[[z]])\cong\lim_{n}X(R[z]/z^{n+1})$. This is proven by Bhatt for qcqs schemes, it is a hard result and not needed for this paper. \end{remark}

\begin{definition} If $M$ is a scheme over $k$, we define the holomorphic loop space, denoted $\mathcal{L}^{+}M$ to be the presheaf on $Aff_{k}$ defined by $\mathcal{L}^{+}M(R)=M(R[[z]])$. The presheaf $\mathcal{L}^{+}_{n}M$ are defined to have $R$-points $M(R[z]/z^{n+1})$.\end{definition}

\begin{lemma} The presheaf $\mathcal{L}^{+}M$ is representable by a scheme, as are the presheaves $\mathcal{L}^{+}_{n}X$. \end{lemma}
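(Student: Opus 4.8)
The plan is to reduce to the affine case and then glue, handling both families simultaneously. First I would fix notation: write $S_n := k[z]/z^{n+1}$ and treat arcs as the ``$n=\infty$'' case $k[[z]]$. For $M=\mathrm{Spec}(A)$ with $A=k[x_1,\dots,x_m]/(f_1,\dots,f_r)$, an $R$-point of $\mathcal{L}^{+}_{n}M$ is a tuple $a_i=\sum_{j=0}^{n}a_{i,j}z^{j}\in R\otimes_k S_n$ with $f_k(a_1,\dots,a_m)=0$; expanding $f_k(a)=\sum_j f_{k,j}(a_{\bullet,\bullet})z^{j}$ into its $z$-components exhibits the functor as $\mathrm{Spec}$ of a polynomial ring in the finitely many coordinates $a_{i,j}$ modulo the ideal generated by the $f_{k,j}$ (equivalently, the Weil restriction of $M\times_k S_n$ along the finite free extension $k\to S_n$). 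The arc case is identical but with the infinite family of coordinates $a_{i,j}$, $j\ge 0$, and infinitely many relations, giving an honest (non-finite-type) affine scheme. Crucially, in the affine case the comparison $M(R[[z]])\cong\lim_n M(R S_n)$ is automatic, since $\mathrm{Hom}_{k\text{-alg}}(A,-)$ carries the limit $\lim_n R S_n=R[[z]]$ to a limit of sets; this is why the subtlety flagged in the Remark does not intervene here.

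Next I would verify that $R\mapsto M(R[[z]])$ and $R\mapsto M(RS_n)$ are Zariski sheaves. For a finite affine cover this follows because $M$ is itself a sheaf and the assignments $R\mapsto RS_n$ (finite free, hence flat) and $R\mapsto R[[z]]$ are compatible with the finite products and localizations appearing in the sheaf condition over a quasi-compact base. With the sheaf property in hand, the strategy is to realize $\mathcal{L}^{+}M$ (resp.\ $\mathcal{L}^{+}_{n}M$) as glued from the affine pieces $\mathcal{L}^{+}U_i$ over an affine open cover $M=\bigcup_i U_i$. The gluing is organized through the \emph{origin map}: the ring map $R[[z]]\to R$ sending $z\mapsto 0$ (resp.\ $RS_n\to R$) produces, from any arc $\phi$, an origin $\bar\phi\colon\mathrm{Spec}(R)\to M$, and I would define the candidate open subfunctor $F_i\subseteq\mathcal{L}^{+}M$ to be those arcs whose origin lands in $U_i$.

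The heart of the argument, and the step I expect to be the main obstacle, is the claim that an arc lands entirely in $U_i$ as soon as its origin does; this is what makes each $F_i$ an open subfunctor represented by $\mathcal{L}^{+}U_i$. For the truncated spaces this is immediate, since $z$ is nilpotent and so $\mathrm{Spec}(RS_n)$ and $\mathrm{Spec}(R)$ are homeomorphic, making ``factoring through an open'' a purely topological condition on the origin. For the genuine arc space the key input is that $z$ lies in the Jacobson radical of $R[[z]]$, because $1+zg$ is invertible with inverse $\sum_{j\ge0}(-zg)^{j}$ for every $g$. Concretely, if $Z=M\setminus U_i$ is cut out by an ideal whose pullback along $\phi$ is $I\subseteq R[[z]]$, then the origin avoiding $Z$ means $I+(z)=R[[z]]$, forcing $I$ to contain a unit $1-zg$, whence $I=R[[z]]$ and $\phi^{-1}(Z)=\varnothing$. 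Thus $F_i\times_{\mathcal{L}^{+}M}\mathrm{Spec}(R)=\bar\phi^{-1}(U_i)$ is open in $\mathrm{Spec}(R)$, the $F_i$ are representable open subfunctors, and they cover $\mathcal{L}^{+}M$ since every arc has an origin lying in some $U_i$. Invoking the standard criterion that a Zariski sheaf admitting a covering family of representable open subfunctors is representable by a scheme then completes the proof. I would stress that this Jacobson-radical argument proves representability of $\mathcal{L}^{+}M$ \emph{directly}, without identifying it with $\lim_n\mathcal{L}^{+}_{n}M$, and so sidesteps entirely the hard comparison discussed in the Remark.
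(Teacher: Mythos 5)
Your overall strategy (affine case, then gluing via origin-based open subfunctors) is the right one, and most of it is carried out correctly: the affine case, the truncated spaces $\mathcal{L}^{+}_{n}M$, and the Jacobson-radical argument showing that an arc factors through an open $U_{i}$ as soon as its origin does, are all complete and correct; that last point is the key geometric input, and your treatment of it is more careful than the paper's two-line sketch. The gap is the step you dispatch in one sentence: the claim that $R\mapsto M(R[[z]])$ is a Zariski sheaf because $R\mapsto R[[z]]$ is ``compatible with the \ldots localizations appearing in the sheaf condition.'' It is not. For $g\in R$ the natural map $(R[[z]])_{g}\to R_{g}[[z]]$ is a $z$-adic completion, not an isomorphism (elements of $R_{g}[[z]]$ may need unboundedly high powers of $g$ in their coefficients), and it is not surjective. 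Hence the schemes $\mathrm{Spec}(R_{g_{i}}[[z]])$ do \emph{not} form an open cover of $\mathrm{Spec}(R[[z]])$, and the equalizer condition for $M(R[[z]])\to\prod_{i}M(R_{g_{i}}[[z]])$ over $\prod_{i,j}M(R_{g_{i}g_{j}}[[z]])$ cannot be deduced from gluing of scheme morphisms into $M$. Concretely, take $R=k[x]$ and the arc $\mathrm{Spec}(R[[z]])\to\mathbb{A}^{1}_{u}$ given by $u\mapsto x-z$: its restriction to $\mathrm{Spec}(R_{x}[[z]])$ lands in the open $D(u)$, since $x-z=x(1-x^{-1}z)$ is a unit in $R_{x}[[z]]$, but its restriction to the honest Zariski open $\mathrm{Spec}((R[[z]])_{x})$ does not (the kernel of the map $(k[x][[z]])_{x}\to k((t))$, $x\mapsto t$, $z\mapsto t$, is a prime containing $x-z$). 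The completed localizations see different geometry from the Zariski topology of $\mathrm{Spec}(R[[z]])$. Your truncated case is immune precisely because $(R[z]/z^{n+1})_{g}=R_{g}[z]/z^{n+1}$ on the nose.

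This is not a patchable slip, because for non-affine $M$ the gluing half of the sheaf condition is, modulo elementary bookkeeping, equivalent to the Bhatt comparison that the paper's Remark calls ``a hard result.'' Separatedness of the presheaf is elementary; but given compatible $\phi_{i}\in M(R_{g_{i}}[[z]])$, what one can glue elementarily are their truncations, yielding only an element of $\lim_{n}M(R[z]/z^{n+1})$, and conversely every such compatible system of jets arises this way; so producing the glued \emph{arc} is exactly the problem of algebraizing compatible jets, i.e.\ proving $M(R[[z]])\cong\lim_{n}M(R[z]/z^{n+1})$. Equivalently: your own (correct) open-subfunctor analysis shows that if the sheaf property held, then $R\mapsto M(R[[z]])$ would be represented by the scheme glued from the $\mathcal{L}^{+}U_{i}$, which is $\lim_{n}\mathcal{L}^{+}_{n}M$, and that identity of functors \emph{is} Bhatt's theorem. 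So your closing sentence --- that the argument ``sidesteps entirely the hard comparison'' --- is a red flag rather than a feature: an argument this soft cannot sidestep the comparison, because it implies it. What your Jacobson-radical argument does prove unconditionally is the quasi-affine case: for $M$ open in an affine $W$, the open subscheme $\mathrm{origin}^{-1}(M)\subseteq\mathcal{L}^{+}W$ represents $R\mapsto M(R[[z]])$ with no sheaf-theoretic input. For general $M$ one must either construct $\mathcal{L}^{+}M$ as the scheme glued from the $\mathcal{L}^{+}U_{i}$ (equivalently, as $\lim_{n}\mathcal{L}^{+}_{n}M$) and invoke Bhatt's theorem to identify its functor of points with $M(R[[z]])$, or restrict the class of schemes considered; the paper's own sketch (``compatibility with Zariski colimits'') quietly elides the same point.
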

\begin{proof} \emph{Sketch}. One checks the result for $M=\mathbb{A}^{1}$ and observes compatibility with the formation of limits to deduce it for arbitrary affine schemes. One then checks compatability with Zariski colimits to conclude. \end{proof}

We collect below some simple properties of the spaces $\mathcal{L}^{+}M$. \begin{remark}\begin{itemize}

\item Let $A$ be a $k$-algebra. We define the $k$-algebra $\mathcal{L}^{+}A$ to be generated by symbols $a_{i}$, for $a\in A$ and $i$ a non-negative integer, subject to the relations $(ab)_{n}=\sum a_{i}b_{n-i}$. Let $X=spec(A)$ be an affine $k$-scheme, then we have $\mathcal{L}^{+}X\cong spec(\mathcal{L}^{+}A)$.
\item The $\mathbb{G}_{m}(R)$ action on $R[[z]]$ endows the space $\mathcal{L}^{+}M$ with an action of $\mathbb{G}_{m}$. Locally this corresponds the grading on the ring of functions $\mathcal{L}^{+}A$ defined by stipulating that $a_{i}$ have degree $i$. This is usually referred to as the \emph{conformal} grading.
\item There is an evaluation morphism $$ev_{z}:(\mathcal{L}^{+}M)[[z]]\rightarrow M.$$ Here if $X$ is a space, locally $spec(A)$ we write $X[[z]]$ for the space locally of the form $spec(A[[z]])$. This evaluation morphism is gotten from the functor of points description as a universal map. In local coordinates it takes the form $a\mapsto \sum a_{i}z^{i}$.
\item There is a global vector field, $\delta_M$ on $\mathcal{L}^{+}M$, defined locally by $\delta_{A}(a_{i})=(i+1)a_{i+1}$. 
\end{itemize}\end{remark}
\subsection{$\delta$-Schemes}

In fact, it is often appropriate to consider the vector field $\delta_{M}$ as additional structure on $\mathcal{L}^{+}M$, and to insist that all our manipulations respect this structure. It will simplify exposition somewhat to introduce the category whose objects are pairs $(X,\delta)$ with $\delta$ a global vector field on $X$. Morphisms in this category are defined in the evident manner. We denote this category $Sch_{k}^{\delta}$ and refer to its objects as $\delta$-schemes. The opposite to the category of affine $\delta$-schemes will be called the category of $\delta$-algebras and denoted $Alg^{\delta}_{k}$. Let us observe that there is a forgetful functor $ \phi:Sch^{\delta}_{k}\rightarrow Sch_{k}$. We are now in a position to give an elegant characterization of the functor $X\mapsto \mathcal{L}^{+}X$ (equipped with its $\delta$-structure). We shall refer henceforth to $(\mathcal{L}^{+}M,\delta_{M})\in Sch^{\delta}_{k}$ as simply $\mathcal{L}^{+}_{\delta}M$, and define similarly $\mathcal{L}^{+}_{\delta}\in Alg^{\delta}_{k}$.

\begin{lemma} There is an adjunction $$Map_{Sch^{\delta}_{k}}(Y,\mathcal{L}^{+}_{\delta}X)\cong Map_{Sch_{k}}(\phi(Y),X).$$ \end{lemma}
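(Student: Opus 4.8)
The plan is to establish the adjunction by exhibiting an explicit bijection and checking naturality, following the pattern of the functor-of-points description given in the preceding remarks. Since the statement concerns an adjunction between the category of $\delta$-schemes and ordinary schemes, and both sides are determined by their behavior on affine pieces (by the representability lemma and the fact that $\mathcal{L}^{+}$ is built by gluing affine constructions), I would first reduce to the affine case. That is, it suffices to produce a natural isomorphism $Map_{Alg^{\delta}_{k}}(\mathcal{L}^{+}_{\delta}A, B) \cong Map_{Alg_{k}}(A, \phi(B))$ for $\delta$-algebras $(B,\delta_B)$ and ordinary $k$-algebras $A$, where $\mathcal{L}^{+}_{\delta}A$ carries the derivation $\delta_A(a_i) = (i+1)a_{i+1}$.

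First I would construct the two maps of the adjunction. Given a $\delta$-algebra map $F\colon \mathcal{L}^{+}_{\delta}A \to B$, I would produce an ordinary algebra map $A \to B$ by the assignment $a \mapsto F(a_0)$; this uses the universal evaluation morphism $ev_z$ from the remark, restricted to the zeroth jet. Conversely, given $f\colon A \to \phi(B)$, I would define a candidate $\delta$-algebra map on generators by $a_i \mapsto \tfrac{1}{i!}\delta_B^{i}(f(a))$ — here the characteristic zero hypothesis is essential so that the factorials are invertible — and extend multiplicatively. The defining relations $(ab)_n = \sum_i a_i b_{n-i}$ must be checked to be respected, which amounts to verifying that the Leibniz rule for $\delta_B$ reproduces the convolution formula, i.e. $\tfrac{1}{n!}\delta_B^n(f(ab)) = \sum_i \tfrac{1}{i!}\delta_B^i(f(a))\cdot\tfrac{1}{(n-i)!}\delta_B^{n-i}(f(b))$, which is precisely the binomial expansion of $\delta_B^n$ applied to a product.

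I would then verify that these two constructions are mutually inverse. Composing $f \mapsto F_f \mapsto (a \mapsto F_f(a_0))$ returns $a \mapsto f(a)$ since $F_f(a_0) = \tfrac{1}{0!}\delta_B^0(f(a)) = f(a)$, giving one direction immediately. For the other direction, starting from a $\delta$-algebra map $F$, I must show that $F(a_i) = \tfrac{1}{i!}\delta_B^i(F(a_0))$; this follows because $F$ intertwines $\delta_A$ and $\delta_B$, so $\delta_B^i(F(a_0)) = F(\delta_A^i(a_0)) = F(i!\, a_i)$ by iterating the relation $\delta_A(a_j)=(j+1)a_{j+1}$. Naturality in both arguments is then a routine check on generators.

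The main obstacle I anticipate is not any single computation but rather the bookkeeping required to pass from the affine adjunction to the global one: one must confirm that the adjunction bijections are compatible with the Zariski gluing used in the representability lemma, and that the $\delta$-structure glues correctly so that a morphism of $\delta$-schemes is genuinely the same data as a compatible family of local $\delta$-algebra maps. Since $\mathcal{L}^{+}$ commutes with the relevant limits and Zariski colimits (as noted in the proof sketch of representability), I expect this gluing to go through, but it is the step that requires the most care to state cleanly rather than wave through.
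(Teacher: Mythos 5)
Your proof is correct and takes essentially the same approach as the paper: the paper also reduces to the affine case and extends $f\colon A\rightarrow B$ by the divided-power formula $a_{i}\mapsto \delta^{(i)}(f(a)) := \tfrac{1}{i!}\delta^{i}(f(a))$, with well-definedness following from the observation that the divided-power Leibniz rule reproduces exactly the convolution relations $(ab)_{n}=\sum a_{i}b_{n-i}$. Your write-up is somewhat more detailed than the paper's (it makes the mutual-inverse check and the globalization step explicit, where the paper simply says the extension is ``obviously defined'' and that one can work locally), but the core argument is identical.
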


\begin{proof} Note that we can work locally to check this. In this case it says that a morphism $f: A\rightarrow B$, to an algebra $B$ equipped with a derivation $\delta$, can be uniquely extended to a morphism $\mathcal{L}^{+}A$ which is compatible with $\delta_{A}$. We abbreviate $\delta^{(i)}:=\frac{\delta^{i}}{i!}$ throughout. The extension is obviously defined by demanding that $a_{i}$ map to $\delta^{(i)}(f(a))$. The Leibniz rule computing $\delta^{(n)}(xy)$ is identical to the relations imposed on the generators $a_{i}$ and thus we see that this is well defined, and we are done. \end{proof}

We will now consider the theory of quasi-coherent sheaves on $\delta$-schemes. 

\begin{definition} Let $(A,\delta)\in Alg^{\delta}_{k}$. We define the category $Mod(A,\delta)$ to consist of pairs $(M,\delta_{M}:M\rightarrow M)$ satisfyinging $\delta_{M}(am)=a\delta_{M}(m)+\delta(a)m$, ie $\delta_{M}$ is a first order differential operator with symbol $\delta\otimes id\in \Theta_{A}\otimes_{A}Map_{A}(M,M)$. Morphisms are defined in the obvious manner. We globalize this to define $QC(X,\delta)$ for $(X,\delta)\in Sch^{\delta}_{k}.$ \end{definition} Below we collect some simple properties of these categories.

\begin{remark}\begin{itemize} \item There is a forgetful functor $ QC(X,\delta)\rightarrow QC(X)$ sending $(\mathcal{F},\delta)$ to $F$. 
\item $QC(X,\delta)$ has a canonical element $(\mathcal{O}_{X},\delta)$, which we simplify to $\mathcal{O}_{X}$.
\item $QC(X,\delta)$ admits internal homs and tensor products.
\item Writing $X^{\delta}$ for the scheme of zeroes of $\delta$, there is an identification of elements $F\in QC(X,\delta)$ so that $\delta_{F}=0$ with $QC(X^{\delta}).$ The left adjoint to the inclusion of such sheaves is given as $F\mapsto F/\delta$.
\item The cotangent sheaf, $\Omega^{1}_{X}$ admits the structure of a $\delta$-sheaf, as does the tangent sheaf $\Theta_{X}$. The de Rham differential on the cotangent sheaf and the Lie bracket on the tangent sheaf are compatible with the $\delta$-structure.
\end{itemize}\end{remark}\subsection{Natural $\delta$-Sheaves on $\mathcal{L}^{+}X$} It is the goal of this subsection to introduce two functors $QC(X)\rightarrow QC(\mathcal{L}^{+}_{\delta}X)$. These functors will allow us to describe the cotangent and tangent sheaf on $\mathcal{L}^{+}X$ (with their $\delta$-structures.) Finally, we will show that these functors are dual in an appropriate sense (they both take rather large values but the duality is manageable if we work with Tate objects.) We will often work locally and simply remark here that the results all globalize verbatim. \begin{definition} We define a functor $\mathcal{L}^{+}: Mod_{A}\rightarrow Mod(\mathcal{L}^{+}_{\delta}A)$ as follows: $\mathcal{L}^{+}M$ is generated over $\mathcal{L}^{+}A$ by symbols $m_{i}$ for $m\in M$ and $i$ a non-negative integer, subject to the relations $(am)_{j}=\sum a_{i}m_{j-i}$. The $\delta$-structure is defined so that $m_{i}\mapsto (i+1)m_{i+1}$. \end{definition}

\begin{lemma} Write $\pi_{*}$ for the natural functor $Mod(\mathcal{L}^{+}_{\delta}A)\rightarrow Mod_{A}$. Then $\mathcal{L}^{+}$ is the left adjoint to the functor $\pi_{*}$.  \end{lemma}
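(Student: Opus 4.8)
The plan is to exhibit an explicit unit and then to pin down the adjoint map by its interaction with the $\delta$-structure. First I would record what $\pi_{*}$ actually is: the projection $\pi:\mathcal{L}^{+}X\to X$ corresponds to the algebra map $A\to\mathcal{L}^{+}A$, $a\mapsto a_{0}$ (well defined since $(ab)_{0}=a_{0}b_{0}$), so $\pi_{*}$ is restriction of scalars along $a\mapsto a_{0}$ followed by forgetting the operator. Thus for $N\in Mod(\mathcal{L}^{+}_{\delta}A)$ the $A$-module $\pi_{*}N$ carries the action $a\cdot n=a_{0}n$. The candidate unit is $\eta_{M}:M\to\pi_{*}\mathcal{L}^{+}M$, $m\mapsto m_{0}$, which is $A$-linear because $(am)_{0}=a_{0}m_{0}$. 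I must then show that every $g\in\mathrm{Hom}_{A}(M,\pi_{*}N)$ factors uniquely as $\pi_{*}(\tilde g)\circ\eta_{M}$ for a $\delta$-compatible, $\mathcal{L}^{+}A$-linear map $\tilde g:\mathcal{L}^{+}M\to N$, i.e. with $\tilde g(m_{0})=g(m)$.

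Uniqueness is immediate from the $\delta$-structure. Exactly as in the algebra-level adjunction proved above, the relation $\delta(m_{i})=(i+1)m_{i+1}$ forces $m_{i}=\delta^{(i)}(m_{0})$ in $\mathcal{L}^{+}M$, so any $\delta$-compatible extension must satisfy $\tilde g(m_{i})=\delta_{N}^{(i)}(\tilde g(m_{0}))=\delta_{N}^{(i)}(g(m))$; since the $m_{i}$ generate $\mathcal{L}^{+}M$ over $\mathcal{L}^{+}A$, this determines $\tilde g$ entirely.

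For existence I would \emph{define} $\tilde g(m_{i}):=\delta_{N}^{(i)}(g(m))$ and extend $\mathcal{L}^{+}A$-linearly; the content is that this respects the defining relations $(am)_{j}=\sum_{i}a_{i}m_{j-i}$. Evaluating the prescription on the left side gives $\tilde g((am)_{j})=\delta_{N}^{(j)}(g(am))=\delta_{N}^{(j)}(a_{0}\,g(m))$, using $A$-linearity of $g$ and $g(am)=a_{0}g(m)$. Now I would invoke the divided-power Leibniz rule for the $\delta$-module $N$, namely $\delta_{N}^{(j)}(a\cdot n)=\sum_{i+i'=j}\delta^{(i)}(a)\,\delta_{N}^{(i')}(n)$ for $a\in\mathcal{L}^{+}A$, together with the identity $\delta^{(i)}(a_{0})=a_{i}$ (the same computation that yields $m_{i}=\delta^{(i)}(m_{0})$). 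This rewrites the left side as $\sum_{i}a_{i}\,\delta_{N}^{(j-i)}(g(m))=\sum_{i}a_{i}\,\tilde g(m_{j-i})$, which is precisely the $\mathcal{L}^{+}A$-linear image of the right-hand side of the relation, so $\tilde g$ is well defined. A parallel, easier check shows $\tilde g\circ\delta_{\mathcal{L}^{+}M}=\delta_{N}\circ\tilde g$ on generators (both sides equal $(i+1)\delta_{N}^{(i+1)}(g(m))$ on $m_{i}$), so $\tilde g$ is a morphism in $Mod(\mathcal{L}^{+}_{\delta}A)$.

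I expect the sole genuine obstacle to be this well-definedness verification; everything else is formal. The crux is that the combinatorics of the module relations $(am)_{j}=\sum_{i}a_{i}m_{j-i}$ are governed by the same divided-power Leibniz rule already appearing in the preceding lemma, so once the identity $\delta^{(i)}(a_{0})=a_{i}$ is isolated the compatibility becomes automatic. I would close by noting that naturality of the bijection $\tilde g\leftrightarrow g$ in $M$ and $N$ is routine, and that since all constructions are local the statement globalizes to $QC(\mathcal{L}^{+}_{\delta}X)$ verbatim.
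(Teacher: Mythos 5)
Your proof is correct and takes essentially the same route as the paper's (one-line) argument: extend the $A$-linear map $g:M\rightarrow\pi_{*}N$ in the only $\delta$-compatible way, namely $\tilde g(m_{i})=\delta_{N}^{(i)}(g(m))$, and check that the divided-power Leibniz rule makes this respect the relations $(am)_{j}=\sum_{i}a_{i}m_{j-i}$. You have merely supplied the well-definedness and uniqueness details that the paper leaves implicit, exactly in the spirit of its proof of the algebra-level adjunction.
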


\begin{proof} Note that, for $M$ a module for $A$, and $N$ a module for $\mathcal{L}^{+}_{\delta}A$  we must show simply that we have $$Map_{\mathcal{L}^{+}_{\delta}A}(\mathcal{L}^{+}M,N)\cong Map_{A}(M,\pi_{*}N).$$ We simply extend the morphism $M\rightarrow \pi_{*}N$ in the only way possible to all of $\mathcal{L}^{+}M$ and observe that this is really a map of modules on $\mathcal{L}^{+}_{\delta}A$. \end{proof}

\begin{corollary} There is an identification of modules on $\mathcal{L}^{+}_{\delta}A$, $$\mathcal{L}^{+}\Omega^{1}_{A}\cong \Omega^{1}_{\mathcal{L}^{+}A}.$$ \end{corollary}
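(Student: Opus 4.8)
The plan is to exhibit an explicit mutually inverse pair of morphisms between the two $\mathcal{L}^{+}_{\delta}A$-modules, matching the generators $(da)_{i}$ on the left with the generators $d(a_{i})$ on the right.

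First I would construct the forward map $\Phi:\mathcal{L}^{+}\Omega^{1}_{A}\rightarrow\Omega^{1}_{\mathcal{L}^{+}A}$ using the adjunction of the preceding Lemma. By that adjunction, a morphism $\mathcal{L}^{+}\Omega^{1}_{A}\rightarrow\Omega^{1}_{\mathcal{L}^{+}A}$ in $Mod(\mathcal{L}^{+}_{\delta}A)$ is the same datum as an $A$-linear map $\Omega^{1}_{A}\rightarrow\pi_{*}\Omega^{1}_{\mathcal{L}^{+}A}$, equivalently (by the universal property of the module of differentials) a $k$-derivation $A\rightarrow\pi_{*}\Omega^{1}_{\mathcal{L}^{+}A}$. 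The assignment $a\mapsto d(a_{0})$ is such a derivation: since $(ab)_{0}=a_{0}b_{0}$ and the $A$-module structure on $\pi_{*}$ is through $a\mapsto a_{0}$, the identity $d(a_{0}b_{0})=a_{0}\,d(b_{0})+b_{0}\,d(a_{0})$ is exactly the required Leibniz rule. Tracing this through the adjunction, and using that $\delta^{(i)}(a_{0})=a_{i}$ together with the compatibility of $\delta$ with the de Rham differential, one finds $\Phi((da)_{i})=\delta^{(i)}(d a_{0})=d(a_{i})$; because $\Phi$ is produced inside the $\delta$-category it is automatically $\delta$-compatible.

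Next I would construct a candidate inverse $\Psi:\Omega^{1}_{\mathcal{L}^{+}A}\rightarrow\mathcal{L}^{+}\Omega^{1}_{A}$ via the universal property of $\Omega^{1}_{\mathcal{L}^{+}A}$, i.e. by specifying a $k$-derivation $\tilde{d}:\mathcal{L}^{+}A\rightarrow\mathcal{L}^{+}\Omega^{1}_{A}$ with $a_{i}\mapsto(da)_{i}$. Here lies the main point: I must check that this assignment is consistent with the defining relations $(ab)_{n}=\sum_{i}a_{i}b_{n-i}$ of $\mathcal{L}^{+}A$. Applying $\tilde{d}$ to the left-hand generator gives $(d(ab))_{n}=(a\,db+b\,da)_{n}=\sum_{j}a_{j}(db)_{n-j}+\sum_{j}b_{j}(da)_{n-j}$ by the module relations in $\mathcal{L}^{+}\Omega^{1}_{A}$, while applying the derivation property to the right-hand side gives $\sum_{i}\bigl(a_{i}(db)_{n-i}+b_{n-i}(da)_{i}\bigr)$, and the two agree after reindexing the second sum. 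This computation, verifying that the loop relations and the Leibniz rule are interchangeable, is the only nontrivial step, and it is the exact analogue of the compatibility already exploited in the proof of the adjunction Lemma.

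Finally I would conclude. Since $\mathcal{L}^{+}A$ is generated as an algebra by the $a_{i}$, the module $\Omega^{1}_{\mathcal{L}^{+}A}$ is generated over $\mathcal{L}^{+}A$ by the $d(a_{i})$; dually, expanding $\omega=\sum_{k}a^{(k)}db^{(k)}$ shows that $\mathcal{L}^{+}\Omega^{1}_{A}$ is generated over $\mathcal{L}^{+}A$ by the elements $(db)_{m}$. As $\Phi$ and $\Psi$ are both $\mathcal{L}^{+}A$-linear and are mutually inverse on these generating sets, they are mutually inverse everywhere, so $\Phi$ is an isomorphism; its $\delta$-compatibility then forces $\Psi$ to be $\delta$-compatible as well. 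I expect the bookkeeping in the relation check of the previous paragraph to be the only genuine obstacle, everything else being a formal consequence of the two universal properties in play.
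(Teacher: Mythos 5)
Your proposal is correct and takes essentially the same route as the paper: the paper deduces the isomorphism ``immediately'' from the adjunction $\mathcal{L}^{+}\dashv\pi_{*}$ together with the observation that $A$-derivations into $\pi_{*}N$ extend uniquely to $\delta$-compatible derivations of $\mathcal{L}^{+}A$, recording the same explicit correspondence $da_{i}\mapsto(da)_{i}$ that your maps $\Phi$ and $\Psi$ realize. Your second paragraph (checking that $a_{i}\mapsto(da)_{i}$ respects the loop relations) is exactly the verification underlying the paper's extension claim, so you have simply unpacked its representability one-liner into explicit mutually inverse morphisms.
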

\begin{proof} The adjunction description implies this immediately, noting that $A$-derivations into $\pi_{*}N$ uniquely extend to $\delta$-compatible $\mathcal{L}^{+}A$-derivations into $N$. Note that explicitly this correspondence is given by $da_{i}\mapsto (da)_{i}$.\end{proof}

There is another functor of interest. To introduce it we switch to geometric language and consider the correspondence of spaces, $X\longleftarrow\mathcal{L}^{+}X[[z]]\longrightarrow\mathcal{L}^{+}X$, given by the evaluation morphism $ev_{z}$ and the natural projection $\pi_{z}$. Note that we can upgrade this correspondence to one of $\delta$-schemes. To do this we give $X$ the trivial $\delta$-structure. The $\delta$-structure on $\mathcal{L}^{+}X[[z]]$ is defined as $\delta_{X}-\partial_{z}$. That $ev_{z}$ respects $\delta$-structures follows by noting that (locally) we have $\sum_{i}\delta_{A}(a_{i})z^{i}=\partial_{z}\sum_{i}a_{i}z^{i}$.

\begin{definition} We define the functor $\int$ by $$\int:=\pi_{z*}ev_{z}^{*}: QC(X)\longrightarrow QC(\mathcal{L}^{+}_{\delta}X).$$\end{definition}

\begin{remark}  This can be defined quite generally as a functor $\int_{F}:QC(X)\rightarrow QC(Maps(F,X))$. It is instructive to compute first order deformations of a map $f\in Maps(F,X)$. We see that they are given by $\Gamma(F,f^{*}\Theta_{X})$, that is to say by the fibre at the point $f$ of $\int\Theta_{X}$.This compuation globalises to an identification $\int_{F}\Theta_{X}\cong \Theta_{Maps(F,X)}$. Recalling our heuristic definition of $\mathcal{L}^{+}$ as $Maps(D_{z},-)$, we should have $\int\Theta_{X}\cong\Theta_{\mathcal{L}^{+}X}$. We will see this below.\end{remark}

\begin{lemma} We have $\int\Theta_{X}\cong\Theta_{\mathcal{L}^{+}X}$ as objects of $QC(\mathcal{L}^{+}_{\delta}X)$. \end{lemma}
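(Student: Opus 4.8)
The plan is to establish the isomorphism $\int\Theta_{X}\cong\Theta_{\mathcal{L}^{+}X}$ as objects of $QC(\mathcal{L}^{+}_{\delta}X)$ by working locally and comparing both sides as explicit modules over $\mathcal{L}^{+}A$, then checking compatibility with the $\delta$-structures. Since the statement globalizes verbatim (as the paper repeatedly emphasizes), it suffices to treat $X=\mathrm{spec}(A)$ affine. First I would unwind the right-hand side: by the corollary above we have $\Omega^{1}_{\mathcal{L}^{+}A}\cong\mathcal{L}^{+}\Omega^{1}_{A}$, and $\Theta_{\mathcal{L}^{+}A}$ is the dual $\mathrm{Hom}_{\mathcal{L}^{+}A}(\Omega^{1}_{\mathcal{L}^{+}A},\mathcal{L}^{+}A)$, with the induced $\delta$-structure coming from the one on $\Omega^{1}$ via the Leibniz-type rule $\delta(\xi)(\omega)=\delta(\xi(\omega))-\xi(\delta\omega)$. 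So the right-hand side is, concretely, the module of $\mathcal{L}^{+}A$-linear functionals on $\mathcal{L}^{+}\Omega^{1}_{A}$.

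Next I would unwind the left-hand side. By definition $\int=\pi_{z*}ev_{z}^{*}$, so $\int\Theta_{X}=\pi_{z*}(ev_{z}^{*}\Theta_{X})$, where $ev_{z}:\mathcal{L}^{+}X[[z]]\to X$ is the evaluation map and $\pi_{z}:\mathcal{L}^{+}X[[z]]\to\mathcal{L}^{+}X$ the projection. Locally $ev_{z}^{*}\Theta_{A}=\Theta_{A}\otimes_{A}\mathcal{L}^{+}A[[z]]$, where the $A$-module structure on $\mathcal{L}^{+}A[[z]]$ is through the map $ev_{z}^{\#}:A\to\mathcal{L}^{+}A[[z]]$, $a\mapsto\sum_{i}a_{i}z^{i}$; and $\pi_{z*}$ simply remembers the underlying $\mathcal{L}^{+}A$-module, so $\int\Theta_{A}=\Theta_{A}\otimes_{A}\mathcal{L}^{+}A[[z]]$ viewed over $\mathcal{L}^{+}A$. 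The idea is then to construct the comparison map directly: an element $\theta\otimes z^{j}$ should pair with $(da)_{i}$ (a typical generator of $\mathcal{L}^{+}\Omega^{1}_{A}$, using the corollary) according to the natural contraction, and I would check this produces a well-defined, $\delta$-equivariant $\mathcal{L}^{+}A$-linear isomorphism. The motivating heuristic from the preceding remark — that first-order deformations of a map $f\in Maps(F,X)$ are $\Gamma(F,f^{*}\Theta_{X})$, whose fibrewise aggregate is $\int\Theta_{X}$ — is exactly what makes this pairing the correct one, and one sees the $z$-powers on the left matching the index grading on the right.

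The main obstacle I expect is bookkeeping the $\delta$-structures correctly on both sides and verifying they agree under the comparison map. On $\mathcal{L}^{+}X[[z]]$ the $\delta$-structure was defined as $\delta_{X}-\partial_{z}$ with $X$ given the trivial structure, so pushing forward along $\pi_{z*}$ the operator $\partial_{z}$ on the $z$-variable interacts with the index shift $m_{i}\mapsto(i+1)m_{i+1}$ that defines $\delta$ on the tangent sheaf; getting the signs and the $(i+1)$ factors to line up is where the real care is needed. The cleaner alternative, which I would pursue in parallel, is to avoid explicit generators altogether and argue by adjunction: the evaluation/projection correspondence is a correspondence of $\delta$-schemes with $ev_{z}$ respecting $\delta$-structures (as shown in the text via $\sum_{i}\delta_{A}(a_{i})z^{i}=\partial_{z}\sum_{i}a_{i}z^{i}$), so $\int$ lands in $QC(\mathcal{L}^{+}_{\delta}X)$ automatically, and one identifies $\int\Theta_{X}$ with the $\delta$-module representing first-order deformations, which by the universal property of $\mathcal{L}^{+}_{\delta}$ established in the earlier adjunction lemma is precisely $\Theta_{\mathcal{L}^{+}X}$. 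I would use the functor-of-points description of $\Theta_{\mathcal{L}^{+}X}$ as $\mathcal{L}^{+}X$-families of dual numbers maps, match it against $\pi_{z*}ev_{z}^{*}\Theta_{X}$ fibrewise, and invoke faithful-flatness/density of the representable functors to upgrade the fibrewise identification to a genuine isomorphism of $\delta$-sheaves.
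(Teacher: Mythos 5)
Your two routes both shadow arguments that actually occur in the paper, but each one, as written, has a gap exactly at its critical point. The pairing route is the paper's later duality lemma $(\mathcal{L}^{+}F)^{\vee}\cong\int F^{\vee}$ specialized to $F=\Omega^{1}_{X}$ (indeed the paper remarks, right after the present lemma, that this case of the duality has thereby been proven), so it is a legitimate strategy, though note it requires $\Omega^{1}_{X}$ to be locally free, a hypothesis not in the statement. The gap is that everything hinges on what ``the natural contraction'' is, and you never write it down. The correct pairing sends $(\theta\otimes z^{j})\otimes(da)_{i}$ to $\delta^{(i-j)}(\theta(a))$, equivalently to the $z^{i}$-coefficient of $z^{j}\sum_{k}(\theta(a))_{k}z^{k}$, with negative divided powers of $\delta$ declared zero; it is \emph{not} diagonal in $(i,j)$. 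Your guiding picture that ``the $z$-powers match the index grading'' is only valid against a coordinate frame, where $\theta(x^{(m)})$ is a constant killed by $\delta$; a genuinely diagonal contraction $\delta_{ij}\,\theta(a)$ fails to respect the relations $(a\,db)_{n}=\sum_{k}a_{k}(db)_{n-k}$ in $\mathcal{L}^{+}\Omega^{1}_{A}$ and the $ev_{z}$-twisted $A$-module structure on $\Theta_{A}\otimes_{A}\mathcal{L}^{+}A[[z]]$. Since well-definedness, bijectivity and $\delta$-equivariance are all concentrated in this formula, deferring the formula defers essentially the whole proof.

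The ``cleaner'' alternative contains a genuine error: you invoke the wrong universal property. The earlier adjunction lemma classifies $\delta$-\emph{compatible} morphisms out of $\mathcal{L}^{+}_{\delta}A$, whereas $\Theta_{\mathcal{L}^{+}X}$ consists of \emph{all} derivations of $\mathcal{L}^{+}A$ --- the $\delta$-structure on the tangent sheaf is the operator $D\mapsto[\delta,D]$, not a constraint on sections --- and a derivation corresponds to a section $\mathcal{L}^{+}A\to\mathcal{L}^{+}A[\epsilon]$ that need not commute with $\delta$. The $\delta$-adjunction therefore only sees the $\delta$-equivariant derivations, namely $Der(A,\mathcal{L}^{+}A)$ (derivations of $A$ valued in $\mathcal{L}^{+}A$ via $a\mapsto a_{0}$), which is a proper submodule: already for $A=k[x]$ it gives $\mathcal{L}^{+}A\,\partial_{x}$, whereas $\Theta_{\mathcal{L}^{+}A}=\prod_{i}\mathcal{L}^{+}A\,\partial_{x_{i}}$. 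What the argument actually needs is the plain functor-of-points definition $\mathcal{L}^{+}X(R)=X(R[[z]])$, applied with $R=\mathcal{L}^{+}A[\epsilon]$: algebra maps $\mathcal{L}^{+}A\to\mathcal{L}^{+}A[\epsilon]$ lifting the identity are the same as algebra maps $A\to\mathcal{L}^{+}A[[z]][\epsilon]$ lifting $ev_{z}$, i.e.\ elements of $Der_{A}(A,\mathcal{L}^{+}A[[z]])=\int\Theta_{A}$. Carried out universally in this way (rather than ``fibrewise plus density of representable functors,'' which you would still need to make precise), this is exactly the paper's proof: it identifies $\pi_{z*}Der_{A}(A,\mathcal{L}^{+}A[[z]])$ with $Der(\mathcal{L}^{+}A,\mathcal{L}^{+}A)$ by sending $\eta$ to the derivation $a_{i}\mapsto(z^{i}\text{-coefficient of }\eta(a))$, and $\mathcal{L}^{+}A$-linearity and $\delta$-equivariance of the identification then come for free.
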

\begin{proof} We work locally as usual. Considering $\mathcal{L}^{+}_{\delta}A[[z]]$ as an $A$-algebra via the evaluation map $ev_{z}$ it suffices to prove that there is an equivalence of $\delta$-modules on $\mathcal{L}^{+}_{\delta}A$, $$\pi_{z*}Der_{A}(A,\mathcal{L}^{+}_{\delta}A[[z]])\cong Der_{\mathcal{L}^{+}_{\delta}A}(\mathcal{L}^{+}_{\delta}A,\mathcal{L}^{+}_{\delta}A).$$ Indeed one sends an element $\eta$ of the left hand side to the derivation on $\mathcal{L}^{+}_{\delta}A$ sending $a_{i}$ to the $z^{i}$-coefficient of $\eta(a)$. Note that this is little more than the observation that by the functor of points definition of $\mathcal{L}^{+}$, first order algebra deformations of $id_{\mathcal{L}^{+}}$ are first order deformations of $ev_{z}$. \end{proof}

\begin{remark} We note that in the case of $F=\Omega^{1}_{X}$ we have proven that $\int F^{\vee}\cong (\mathcal{L}^{+}F)^{\vee}.$ We would like to generalise this to arbitrary locally free sheaves on $X$ (better perfect complexes). The complicating factor is the size of $\mathcal{L}^{+}X$. Indeed even for $X$ smooth one cannot expect a duality between its cotangent and tangent sheaves. This is remedied by noting that for $F$ locally free, the sheaves $\mathcal{L}^{+}F$ and $\int F$ are both naturally given the structure of Tate objects of $Pro(QC(\mathcal{L}^{+}_{\delta}X))$. We recall here that the category of Tate sheaves on $X$ is defined to the smallest extension and summand closed subcategory of $Pro(QC(X))$ containing $QC(X)$ and $Pro(Perf(X))$. Locally we can identify this category with $\mathcal{O}(X)$-modules endowed with a locally linearly compact topology. This category inherits a natural duality functor. For a proper reference we recommend \cite{Dr}.\end{remark}\begin{definition} \begin{itemize}\item For $F$ a sheaf on $F$ we define the object $\mathcal{L}^{+}F\in Pro(QC(\mathcal{L}^{+}_{\delta}))$ to be discrete, ie equal to its image under the natural embedding  $QC\rightarrow Pro(QC)$. Topologically this corresponds to a discretely topologized sheaf.\item We define the object $\int F\in Pro(QC(\mathcal{L}^{+}_{\delta}X))$ to be the pro-system $\{\int F \;mod \,z^{n}\}_{n}$. The notation should be clear (we consider the evaluation morphism as a ind-system of morphisms given by killing powers of $z$ and take the pro-system of functors on sheaves corresponding to the ind-system of correspondences).\end{itemize} \end{definition}

\begin{lemma} If $F$ is a locally free sheaf then the functor $\int$ maps $F$ to a Tate object of $Pro(QC(\mathcal{L}^{+}_{\delta}X))$, as does the functor $\mathcal{L}^{+}$.\end{lemma}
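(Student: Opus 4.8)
The plan is to reduce everything to a local, module-theoretic computation and then to read off the two cases directly from the definition of the Tate category. First I would pass to the affine local situation $X=spec(A)$ with $F$ the sheaf associated to a finitely generated projective $A$-module $M$. Since both functors commute with finite direct sums---$\mathcal{L}^{+}$ because it is a left adjoint, and $\int=\pi_{z*}ev_{z}^{*}$ because $ev_{z}^{*}$ preserves colimits and $\pi_{z*}$ is restriction of scalars along the affine map $\pi_{z}$---and since Tate objects are by construction closed under finite sums and summands, I may even assume $M=A$ is free of rank one; the general locally free case then follows by localizing on $X$ where $F$ becomes free, invoking additivity, and using that the Tate condition is local on $\mathcal{L}^{+}X$.

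For the functor $\mathcal{L}^{+}$ the claim is essentially immediate. By construction $\mathcal{L}^{+}M$ is an honest $\mathcal{L}^{+}A$-module (the module generated by the symbols $m_{i}$ modulo the stated relations), hence a genuine object of $QC(\mathcal{L}^{+}_{\delta}X)$. It is typically of infinite rank---already $\mathcal{L}^{+}A$ applied to $A$ is free of countable rank---but this is no obstruction: under the embedding $QC(\mathcal{L}^{+}_{\delta}X)\hookrightarrow Pro(QC(\mathcal{L}^{+}_{\delta}X))$ it is a \emph{discrete} pro-object, and $QC$ is contained in the Tate category by definition. Thus $\mathcal{L}^{+}F$ is Tate with no finiteness hypothesis whatsoever.

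The content is therefore in $\int$, and here I would examine the individual truncations $\int_{n}M:=\pi_{z,n*}ev_{z,n}^{*}M$ making up the pro-system $\{\int M\bmod z^{n}\}_{n}$. The ring $\mathcal{L}^{+}A[z]/z^{n}$ is finite free of rank $n$ over $\mathcal{L}^{+}A$ (basis $1,z,\dots,z^{n-1}$), and $ev_{z,n}^{*}M=M\otimes_{A}\mathcal{L}^{+}A[z]/z^{n}$ is finite projective over $\mathcal{L}^{+}A[z]/z^{n}$ by base change; since restriction of scalars along a finite free extension preserves finite projectivity, each $\int_{n}M$ is finite projective over $\mathcal{L}^{+}A$, hence perfect, and the transition maps (reduction mod $z^{n}$) are $\mathcal{L}^{+}A$-linear maps of such. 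Therefore $\int M$ is a pro-object of $Perf(\mathcal{L}^{+}X)$, and $Pro(Perf)$ lies in the Tate category by definition, which proves the assertion for $\int$.

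The delicate point, and the one I would flag as the main obstacle, is reconciling the $\delta$-structure with this truncation filtration so that $\int F$ genuinely lives in the $\delta$-enriched category $Pro(QC(\mathcal{L}^{+}_{\delta}X))$. The operator $\delta_{A}-\partial_{z}$ does \emph{not} preserve the ideal $(z^{n})$, so it descends to no single truncation $\int_{n}M$; indeed the computation $(\delta_{A}-\partial_{z})(z^{n}g)=z^{n}\delta_{A}(g)-nz^{n-1}g-z^{n}\partial_{z}g$ exhibits a spurious $z^{n-1}$ term. The same computation shows, however, that it carries $(z^{n+1})$ into $(z^{n})$ and hence defines a map $\int_{n+1}M\to\int_{n}M$, so that $\delta$ exists precisely as a pro-endomorphism lowering the truncation level by one. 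This is exactly the sense in which $\int M$ is a $\delta$-object of the pro-category, and since the Tate condition is a statement about the underlying pro-object in $QC$, which we have shown is pro-perfect, this subtlety does not affect the conclusion; it is, however, the structural reason the passage to Tate and pro-objects is forced upon us rather than optional.
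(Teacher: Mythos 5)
Your proof is correct and follows essentially the same route as the paper: work locally, reduce by summand/direct-sum closure of the Tate category, note that $\mathcal{L}^{+}F$ is discrete and hence Tate with nothing to check, and exhibit $\int F$ as the pro-system of truncations $\{\int F \bmod z^{n}\}$, each finite projective over $\mathcal{L}^{+}A$, so that $\int F$ lies in $Pro(Perf)$ and hence in the Tate category by definition. Your closing observation that $\delta$ survives only as a level-shifting pro-endomorphism of this system (since $\delta_{A}-\partial_{z}$ carries $(z^{n+1})$ into $(z^{n})$ but does not preserve it) is correct and makes explicit a point the paper's terser proof leaves implicit.
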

\begin{proof} Note that in the case of $\mathcal{L}^{+}$ (which we have discretely topologized) there is nothing to prove. We deal now with the case of $\int$. We may work locally, and thus assume that $F$ is the summand of a free $A$-module. $Tate(\mathcal{L}^{+}_{\delta}X)$ is defined to be closed under summands and thus we are reduced to the case of the module $A:=\mathcal{O}(X)$. In this case we produce the pro-object $\mathcal{L}^{+}_{\delta}A[[z]]/z^{n+1}$, which is a pro-diagram of projective modules and thus is Tate. \end{proof}

We come now to one of the main results of this subsection, namely that the functors $\mathcal{L}^{+}$ and $\int$ are interwtined by duality. Note that it is (at least to the author) slightly surprising that these two functors are so nicely related given their differing definitions - one arising as a certain free $\delta$-sheaf and the other from a functor defined in terms of a universal mapping space correspondence. 

\begin{lemma} For a locally free sheaf $F$ on $X$, there is an isomorphism in $Tate(\mathcal{L}^{+}_{\delta}X)$, $$(\mathcal{L}^{+}F)^{\vee}\cong\int F^{\vee}.$$ \end{lemma}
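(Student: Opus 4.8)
The plan is to produce a single natural pairing
$$\langle -,-\rangle\colon \mathcal{L}^{+}F\otimes_{\mathcal{L}^{+}A}\int F^{\vee}\longrightarrow \mathcal{O}_{\mathcal{L}^{+}X}$$
of Tate $\delta$-sheaves and to show it is perfect, so that the induced map $\Phi_{F}\colon \int F^{\vee}\to(\mathcal{L}^{+}F)^{\vee}$ is the desired isomorphism. Everything is local, so I take $X=\operatorname{spec}A$ with $F$ a finitely generated projective $A$-module, $F^{\vee}$ its dual, and evaluation $F\otimes_{A}F^{\vee}\to A$. The first task is to build the pairing from the correspondence $X\longleftarrow\mathcal{L}^{+}X[[z]]\longrightarrow\mathcal{L}^{+}X$ that already defines $\int$, together with coefficient extraction.

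Concretely, a local section of $\int F^{\vee}=\pi_{z*}ev_{z}^{*}F^{\vee}$ is an element $\omega\in ev_{z}^{*}F^{\vee}=\mathcal{L}^{+}A[[z]]\otimes_{A}F^{\vee}$. For $m\in F$, pulling back the evaluation pairing along $ev_{z}$ yields $\langle 1\otimes m,\omega\rangle\in\mathcal{L}^{+}A[[z]]$, and I set
$$\langle m_{i},\omega\rangle:=[z^{i}]\,\langle 1\otimes m,\omega\rangle\in\mathcal{L}^{+}A,$$
where $[z^{i}]$ denotes the $z^{i}$-coefficient. I would then check the four properties that make this a morphism in $Tate(\mathcal{L}^{+}_{\delta}X)$. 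First, compatibility with the defining relations of $\mathcal{L}^{+}F$: since
$$(am)_{j}=\sum_{i}a_{i}m_{j-i},\qquad ev_{z}(a)=\sum_{i}a_{i}z^{i},$$
multiplication by $ev_{z}(a)$ inside $\mathcal{L}^{+}A[[z]]$ becomes, after extracting the $z^{j}$-coefficient, exactly the convolution $\sum_{i}a_{i}(\,\cdot\,)_{j-i}$, so the two sides agree. Second, $\mathcal{L}^{+}A$-bilinearity is immediate, as coefficient extraction is $\mathcal{L}^{+}A$-linear. Third, $\delta$-compatibility follows from the intertwining $ev_{z}^{*}\delta_{X}=(\delta_{X}-\partial_{z})\,ev_{z}^{*}$ used earlier to upgrade the correspondence to $\delta$-schemes, matched against the $\delta$-structures $m_{i}\mapsto(i+1)m_{i+1}$ on $\mathcal{L}^{+}F$ and $\delta_{X}-\partial_{z}$ on the middle of the correspondence. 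Fourth, continuity: $\langle m_{i},-\rangle$ depends only on $\omega\bmod z^{i+1}$, hence factors through a finite stage of the pro-system $\{\int F^{\vee}\bmod z^{n}\}$, so $\Phi_{F}$ is a map of pro-objects.

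To prove $\Phi_{F}$ is an isomorphism I reduce to the structure sheaf. By naturality of $\Phi_{F}$ in $F$, additivity of $\mathcal{L}^{+}$, $\int$ and of the contravariant functor $(-)^{\vee}$ on Tate objects, together with closure of $Tate(\mathcal{L}^{+}_{\delta}X)$ under summands, it suffices to treat $F$ finite free, and then by additivity $F=\mathcal{O}_{X}$. For the structure sheaf the two sides are computed directly: $\mathcal{L}^{+}\mathcal{O}$ is the free $\delta$-module on one generator, i.e. the discrete free $\mathcal{L}^{+}A$-module on $\{1_{i}\}_{i\ge0}$, while
$$\mathcal{L}^{+}\mathcal{O}\cong\bigoplus_{i\ge0}\mathcal{L}^{+}A\cdot 1_{i},\qquad \int\mathcal{O}=\pi_{z*}\mathcal{O}_{\mathcal{L}^{+}X[[z]]}\cong\varprojlim_{n}\mathcal{L}^{+}A[z]/z^{n+1}=\prod_{i\ge0}\mathcal{L}^{+}A\cdot z^{i},$$
with $\Phi_{\mathcal{O}}$ sending $z^{i}$ to the functional dual to $1_{i}$. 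This is exactly the tautological identification of the linearly compact product with the continuous dual of the discrete free module, an isomorphism of Tate objects. As a consistency check, for $F=\Omega^{1}_{X}$ this pairing recovers the isomorphism $(\mathcal{L}^{+}\Omega^{1})^{\vee}\cong\int\Theta$ already obtained from $\mathcal{L}^{+}\Omega^{1}\cong\Omega^{1}_{\mathcal{L}^{+}X}$ and $\int\Theta\cong\Theta_{\mathcal{L}^{+}X}$.

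The routine parts are the reductions and the four formal verifications of the pairing. The main obstacle is the last point of perfectness: ensuring that $\Phi_{F}$ is an isomorphism of \emph{Tate} objects rather than merely of underlying $\mathcal{L}^{+}A$-modules. Concretely, one must match the pro-structure $\{-\bmod z^{n}\}$ that defines $\int F^{\vee}$ with the linearly compact product topology on $(\mathcal{L}^{+}F)^{\vee}$ dual to the discrete topology on $\mathcal{L}^{+}F$; the continuity observation above ($\langle m_{i},-\rangle$ factoring through $\bmod z^{i+1}$) is precisely what makes these topologies correspond, and it is where I would take the most care. Keeping the pairing $\delta$-compatible throughout is what guarantees the duality lives in $Tate(\mathcal{L}^{+}_{\delta}X)$ and not just in $Tate(\mathcal{L}^{+}X)$.
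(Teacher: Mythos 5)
Your proof is correct, and your pairing is in fact the paper's pairing in disguise: since $a_{i}=\delta^{(i)}(a_{0})$ in $\mathcal{L}^{+}A$, extracting the $z^{i}$-coefficient of $z^{j}\cdot ev_{z}\bigl((m^{\vee},m)\bigr)$ gives exactly $\delta^{(i-j)}(m^{\vee},m)$, which is the paper's formula, and your well-definedness, $\delta$-compatibility and continuity checks coincide with the paper's. Where you genuinely diverge is the proof of perfectness. The paper never reduces to $F=\mathcal{O}$: it works with a general finitely generated projective $M$ directly, descends the pairing to the finite stages $(\int M^{\vee}\bmod z^{n+1})\otimes\mathcal{L}^{+}_{n}M\to\mathcal{L}^{+}A$ (where $\mathcal{L}^{+}_{n}M$ is spanned by the $m_{i}$ with $i\le n$), and proves each stage is perfect by putting finite filtrations on both sides (increasing on $\mathcal{L}^{+}_{n}M$, decreasing on $\int M^{\vee}\bmod z^{n+1}$) whose associated graded pairing is the obvious perfect pairing between $M\otimes_{A}\mathcal{L}^{+}A$ and $M^{\vee}\otimes_{A}\mathcal{L}^{+}A$. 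You instead use naturality of $\Phi_{F}$, additivity of $\mathcal{L}^{+}$, $\int$ and $(-)^{\vee}$, and summand-closure of $Tate(\mathcal{L}^{+}_{\delta}X)$ to reduce to the rank-one free case, where the duality is the tautological one between $\bigoplus_{i}\mathcal{L}^{+}A\cdot 1_{i}$ and $\prod_{i}\mathcal{L}^{+}A\cdot z^{i}$. Both routes need $F$ of finite rank (locally a summand of a finite free module), and both are sound. Your reduction is more formal and has the virtue of making the matching of pro-structures completely explicit in the one case where everything can be written down; the paper's filtration argument is self-contained at each finite stage, needs no naturality or block-diagonal summand bookkeeping, and makes visible exactly how the $z$-adic pro-structure on $\int M^{\vee}$ is dual to the exhaustion of $\mathcal{L}^{+}M$ by the $\mathcal{L}^{+}_{n}M$ --- which is precisely the point you flagged as requiring the most care.
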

\begin{proof} We work locally, writing $A=\mathcal{O}(X)$ and $M$ for the module corresponding to $F$. We denote the pairing $M\otimes M^{\vee}\longrightarrow A$ by $(-,-)$. We define a pairing (also denoted $(-,-)$), $$(-,-): \int M^{\vee}\otimes\mathcal{L}^{+}M\rightarrow\mathcal{L}^{+}_{\delta}A,$$ by sending the element $(m^{\vee}\otimes z^{j})\otimes m_{i}$ to $\delta^{(i-j)}(m^{\vee},m)$, where negative powers of $\delta$ are defined to be zero.\\  \\Let us first note that this is well defined, indeed $(am^{\vee}\otimes z^{j})\otimes (bm)_{i}$ is sent to $$\delta^{(i-j)}(ab(m^{\vee},m))=\sum_{l,k}\delta^{(l)}(a)\delta^{(k)}(b)\delta^{(i-j-k-l)}(m^{\vee},m),$$ which is indeed the image of $\sum_{l,k}a_{l}m^{\vee}\otimes z^{j+l}\otimes b_{k}m_{i-k}$, so that the pairing respects the relations defining $\int$ and $\mathcal{L}^{+}$. Further, note that this pairing is continuous with respect to the topology on $\int M^{\vee}$, since it vanishes for sufficiently high $j$. Finally to see that this pairing is non-degenerate we first note that $\mathcal{L}^{+}M$ is naturally the colimit of finite type submodules $\mathcal{L}^{+}_{n}M$ spanned by $m_{i}$ for $i$ at most $n$. (We caution the reader that these are not sub- $\delta$-modules.) The pairing  $$(-,-): \int M^{\vee}\otimes\mathcal{L}^{+}_{n}M\rightarrow\mathcal{L}^{+}A$$ descends to a pairing  $$(-,-): (\int M^{\vee}\,mod\,z^{n+1})\otimes\mathcal{L}^{+}_{n}M\rightarrow\mathcal{L}^{+}A,$$ and it suffices to show that each of these is non-degenerate. \\ \\ Now observe that $\mathcal{L}^{+}_{n}M$ admits a natural increasing filtration of length $n$ whose associated graded in each degree $i=0,...,n$ is simply a copy of $M\otimes_{A}\mathcal{L}^{+}A$. Dually $(\int M^{\vee}\,mod\,z^{n+1})$ admits a length $n$ decreasing filtration whose associated graded in graded in each degree $i=0,...,n$ is  a copy of $M^{\vee}\otimes_{A}\mathcal{L}^{+}A$. The pairing is compatible with these filtrations and so descends to associated gradeds, in which case it is a sum of the natural pairing between $M^{\vee}\otimes_{A}\mathcal{L}^{+}A$ and  $M\otimes_{A}\mathcal{L}^{+}A$, whence we are done. 
\end{proof} 

\begin{remark} In the case of the trivial one dimensional module $A$, this produces the familiar topological duality between $k[z]$ and $k((z^{-1}))$.  \end{remark}

\subsection{Lie Algebroids and $\int$}

We now continue to study the functor $\int$, noting that the above implies that this is essentially equivalent to studying $\mathcal{L}^{+}$. In the subsequent chapter we would like to study the Poisson Vertex geometry on the loop space of a Poisson variety in terms of Lie algebroids on the loop space. We recall here (to fix notation) only that a Lie algebroid on $X$ is a quasi-coherent sheaf $L$, with a \emph{bracket} map, $[-,-] : L\otimes_{k} L\rightarrow L$, and an $\mathcal{O}$-linear map $\rho: L\rightarrow \Theta_{X}$, called the \emph{anchor}, so that the bracket makes $L$ into a sheaf of $k$ Lie algebras, and we have (locally) that $[l_{1},al_{2}]=a[l_{1},l_{2}]+\rho(l_{1})(a)l_{2}$, for a function $a$ and sections $l_{i}$  of $L$.

The goal of this subsection then is to establish the following (very easy) lemma. \begin{lemma} Let $L$ be a locally free Lie algebroid on $X$. Then the Tate sheaf $\int L$ on $\mathcal{L}^{+}X$ is naturally a Lie algebroid with anchor map $$\int\rho:\int L\longrightarrow \int\Theta_{X}\cong\Theta_{\mathcal{L}^{+}X}.$$\end{lemma}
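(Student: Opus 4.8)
The anchor map is essentially free, so I would dispose of it first: since $\rho:L\rightarrow\Theta_X$ is $\mathcal{O}_X$-linear it is a genuine morphism in $QC(X)$, and I may simply apply the functor $\int$ to it and postcompose with the isomorphism $\int\Theta_X\cong\Theta_{\mathcal{L}^{+}X}$ of the preceding lemma to obtain $\int\rho:\int L\rightarrow\Theta_{\mathcal{L}^{+}X}$. The real content is the bracket, and my plan is to encode it through the Chevalley--Eilenberg (de Rham) description of a Lie algebroid and transport that datum across $\mathcal{L}^{+}$ and the duality just established. I work locally, writing $A=\mathcal{O}(X)$ and $L$ for the corresponding projective module; globalisation is verbatim.

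Recall that for locally free $L$ a Lie algebroid structure is equivalent to a square-zero, degree $+1$, $k$-linear derivation $d$ (the Chevalley--Eilenberg differential) of the graded-commutative $A$-algebra $\Lambda:=\wedge^{\bullet}_A L^{\vee}$, whose degree-$0$ part encodes $\rho$ and whose degree-$1$ part encodes the bracket. The object I wish to equip with such a differential is $\wedge^{\bullet}_{\mathcal{L}^{+}A}(\int L)^{\vee}$, and the first step is the identification $\wedge^{\bullet}_{\mathcal{L}^{+}A}(\int L)^{\vee}\cong\mathcal{L}^{+}\Lambda$. Dualising the previous lemma with $F=L^{\vee}$ gives $(\mathcal{L}^{+}L^{\vee})^{\vee}\cong\int L$, hence $(\int L)^{\vee}\cong\mathcal{L}^{+}L^{\vee}$ by Tate biduality (using $L^{\vee\vee}=L$); strong symmetric monoidality of $\mathcal{L}^{+}$, i.e. $\mathcal{L}^{+}(M\otimes_A N)\cong\mathcal{L}^{+}M\otimes_{\mathcal{L}^{+}A}\mathcal{L}^{+}N$, then lets $\mathcal{L}^{+}$ commute with exterior powers and yields $\wedge^{\bullet}_{\mathcal{L}^{+}A}\mathcal{L}^{+}L^{\vee}\cong\mathcal{L}^{+}\Lambda$.

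Now I transport $d$. Since $\mathcal{L}^{+}\Lambda$ is the free $\delta$-algebra on $\Lambda$, I define $D$ to be the unique $\delta$-compatible derivation with $D(\omega_0)=(d\omega)_0$; the value on higher generators is then forced to be $D(\omega_i)=(d\omega)_i$, because $\omega_i=\delta^{(i)}(\omega_0)$. Well-definedness against the relations $(\omega\eta)_n=\sum\omega_i\eta_{n-i}$ is the by-now-familiar Leibniz computation (exactly as in the adjunction lemma for $\mathcal{L}^{+}$), and $D^2=0$ because $D^2$ is again a $\delta$-compatible derivation which vanishes on the generators $\omega_0$, where it equals $(d^2\omega)_0=0$, hence vanishes identically. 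Thus $(\mathcal{L}^{+}\Lambda,D)$ is a Chevalley--Eilenberg dg-algebra, that is to say a Lie algebroid structure on $\int L$; unwinding the degree-$0$ part of $D$ on $\mathcal{L}^{+}A$ recovers precisely $\int\rho$ (matching the assignment $a_i\mapsto\rho(l)(a)_i$ implicit in $\int\Theta_X\cong\Theta_{\mathcal{L}^{+}X}$), so the anchor is as claimed.

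The genuinely substantive points, rather than the algebra of extending $d$, are the two identifications above inside the Tate category: that $\mathcal{L}^{+}$ is strong symmetric monoidal and commutes with exterior powers, and that the duality of the previous lemma is compatible with these monoidal structures, so that $D$ is continuous as a morphism of Tate sheaves. I expect this monoidal and duality bookkeeping to be the only real obstacle; once it is granted, the Jacobi identity and the Leibniz rule for the bracket are both encoded in $D^2=0$ together with the derivation property of $D$, and follow formally. I note that one could alternatively bypass the Chevalley--Eilenberg packaging and write the bracket on $\int L\cong L\otimes_A\mathcal{L}^{+}A[[z]]$ directly by the transformation-algebroid formula driven by the action $\int\rho$, with the $\delta$-horizontality of $ev_z$ (the vanishing of $(\delta_A-\partial_z)\,ev_z(a)$) supplying the flat connection needed for well-definedness; the verification of Jacobi is then the same computation in dual form.
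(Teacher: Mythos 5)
Your route is genuinely different from the paper's, and it is \emph{almost} viable, but one load-bearing claim in your second step is false as stated, so the proof needs repair before it closes. The paper never dualizes: it isolates the notion of an $n$-polyderivation pair $(f,\sigma(f))$, notes that a Lie algebroid structure on $L$ is exactly such a pair with $n=2$, and proves an extension lemma --- the symbol extends via the lax-monoidal structure of $\int$, and then there is a \emph{unique} $\delta$-compatible extension of $f$ with that symbol; Jacobi and Leibniz for $\int L$ then come along from naturality and uniqueness of these extensions. You instead encode the bracket in the Chevalley--Eilenberg differential $d$ on $\Lambda=\wedge^{\bullet}_{A}L^{\vee}$ and transport $d$ using freeness of the $\delta$-algebra $\mathcal{L}^{+}\Lambda$; your construction of $D$ (forced values $D(\omega_{i})=(d\omega)_{i}$, well-definedness by the Leibniz computation, $D^{2}=0$ since $D^{2}$ is a $\delta$-compatible derivation killing generators) is correct, and it has the merit of packaging Jacobi and Leibniz into $D^{2}=0$ at one stroke, while also landing directly on the complex $C^{*,cont}_{dR,Lie}(\mathcal{L}^{+}X,\int L)$ that the paper uses later.

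The gap is your appeal to ``strong symmetric monoidality of $\mathcal{L}^{+}$'': the module-level functor $\mathcal{L}^{+}$ is only \emph{oplax} monoidal, not strong, and it does not commute with exterior powers. The natural map $\mathcal{L}^{+}(M\otimes_{A}N)\rightarrow\mathcal{L}^{+}M\otimes_{\mathcal{L}^{+}A}\mathcal{L}^{+}N$ sends $(m\otimes n)_{k}$ to $\sum_{i+j=k}m_{i}\otimes n_{j}$; already for $M=N=A$ the source is free on the classes $(1\otimes 1)_{k}$ while the target is free on all $1_{i}\otimes 1_{j}$, so the map is injective but not surjective. Similarly, for $L^{\vee}$ of rank one, $\mathcal{L}^{+}(\wedge^{2}_{A}L^{\vee})=0$ while $\wedge^{2}_{\mathcal{L}^{+}A}\mathcal{L}^{+}L^{\vee}\neq 0$. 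Fortunately, the identification you actually need, $\wedge^{\bullet}_{\mathcal{L}^{+}A}\mathcal{L}^{+}L^{\vee}\cong\mathcal{L}^{+}\Lambda$ (the right-hand side meaning the free graded-commutative $\delta$-algebra on $\Lambda$), is true --- but for a different reason: both sides are left adjoint to the same forgetful functor from graded-commutative $\delta$-algebras over $\mathcal{L}^{+}_{\delta}A$ down to $A$-modules, i.e.\ the free $\delta$-algebra on a free graded-commutative algebra coincides with the free graded-commutative algebra on the free $\delta$-module (in local coordinates, both are free graded-commutative over $\mathcal{L}^{+}A$ on odd generators $x^{\alpha}_{i}$). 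With that substitution your argument goes through and is in effect the dual form of the paper's polyderivation-extension lemma; what remains is exactly the Tate bookkeeping you flag --- biduality $(\int L)^{\vee}\cong\mathcal{L}^{+}L^{\vee}$ (which is the paper's duality lemma) and the continuity needed to dualize $D$ back to a bracket on $\int L$, which for $L$ locally free of finite rank follows since elements of $\wedge^{2}_{\mathcal{L}^{+}A}\mathcal{L}^{+}L^{\vee}$ are finite sums $\sum\alpha_{i}\wedge\beta_{j}$ and pair continuously with $\int L\otimes\int L$.
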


\begin{remark} We should mention why this is not \emph{completely} formal. $\int$ is indeed lax-monoidal, that is we are given natural maps $\int V\otimes\int W\rightarrow\int V\otimes W$, however note that both of these tensor products are taken over $\mathcal{O}$, whereas the bracket map defining a Lie algebroid is not assumed to be $\mathcal{O}$-linear. The fact that $\int$ possesses some additional functoriality with respect to certain non-linear maps of modules is then the point.\end{remark} 

\begin{definition} A map $f: V\rightarrow V$ of (the underlying $k$-vector spaces of) an $A$-module $V$ is said to be a \emph{derivation} if there exists $\sigma(f)\in \Theta_{A}$ so that we have $f(av)=af(v)+\sigma(f)(a)v$ for all $a\in A, v\in V$. An $n$- \emph{polyderivation} of $V$ is a pair consisting of an alternating $k$- linear map, $f: \bigwedge^{n}_{k}V\longrightarrow V$, and an $A$-linear map $\sigma(f)$, called the \emph{symbol} of $f$, $\sigma(f):\bigwedge^{n-1}_{A}V\longrightarrow\Theta_{A}$, satisfying the \emph{symbol condition}, $$f(av_{1}\wedge...\wedge v_{n})=af(v_{1}\wedge...\wedge v_{n})+\sigma(f)(v_{2}\wedge...\wedge v_{n})(a)v_{1}.$$ \end{definition}

\begin{remark} The relevance of this definition to the theory of Lie algebroids should be fairly evident, the pair consisting of a Lie algebroid bracket map and its anchor map form a $2$-polyderivation. For a detailed account of how a certain operad of polyderivations governs Lie algebroid structures on a module see the paper \cite{CM}.\end{remark}

\begin{lemma} Let $(f,\sigma(f))$ be an $n$-polyderivation pair of an $A$-module $V$. Then there exists a natural $n$-polyderivation pair $(\widehat{f},\sigma(\widehat{f}))$ of $\int V$ which is compatible with the $\delta$-structures and so that $\sigma(\widehat{f})$ is the natural morphism coming from the lax-monoidal structure. $$\bigwedge^{n-1}\int V\longrightarrow \int \bigwedge^{n-1}V\longrightarrow\int\Theta_{A}\cong\Theta_{\mathcal{L}^{+}A}.$$\end{lemma}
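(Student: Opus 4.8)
The plan is to work locally, writing $A=\mathcal{O}(X)$ and letting $V$ be the (locally free) $A$-module attached to $F$; the construction will be manifestly natural and so will glue. Recall from the definition of $\int$ that, as an $\mathcal{L}^{+}_{\delta}A$-module, $\int V\cong V\otimes_{A}\mathcal{L}^{+}_{\delta}A[[z]]$ with $A$ acting through $ev_{z}$. I write $v\otimes z^{j}$ for its elements, so that the defining relation reads $(av)\otimes z^{j}=\sum_{i\geq 0}a_{i}\,(v\otimes z^{i+j})$, the $a_{i}\in\mathcal{L}^{+}A$ now being scalars, and the conformal grading places $v\otimes z^{j}$ in weight $-j$. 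I would first \emph{define} $\sigma(\widehat f)$ to be exactly the composite displayed in the statement: being built from the lax-monoidal structure map followed by $\int$ applied to the $A$-linear map $\sigma(f)$, it is automatically $\mathcal{L}^{+}_{\delta}A$-linear and continuous, so this half of the data is formal.

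For $\widehat f$ itself I would choose a local basis $(e_{\alpha})$ of $V$, so that $\int V$ is free over $\mathcal{L}^{+}_{\delta}A$ on the $e_{\alpha}\otimes z^{j}$, and set
$$\widehat f\big((e_{\alpha_{1}}\otimes z^{j_{1}})\wedge\cdots\wedge(e_{\alpha_{n}}\otimes z^{j_{n}})\big):=f(e_{\alpha_{1}}\wedge\cdots\wedge e_{\alpha_{n}})\otimes z^{j_{1}+\cdots+j_{n}},$$
the total power of $z$ being forced by the requirement that $\widehat f$ preserve conformal weight (this formula is also precisely the pushforward along $\pi_{z}$ of the base change of $f$ to $ev_{z}^{*}V$, which is what makes it natural). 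I would then extend $\widehat f$ to all of $\bigwedge^{n}_{k}\int V$ by demanding that $(\widehat f,\sigma(\widehat f))$ satisfy the symbol condition relative to $\mathcal{L}^{+}_{\delta}A$-scalars, peeling off scalars one slot at a time. The entire content of the lemma is that this is \emph{well defined}, i.e.\ independent of the basis, equivalently that the formula respects the relation $(av)\otimes z^{j}=\sum_{i}a_{i}(v\otimes z^{i+j})$.

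This compatibility is the one non-formal step, and it is where the symbol and the geometry of $\mathcal{L}^{+}X$ interlock. I would verify it by direct comparison (writing $\vec v=v^{(1)}\wedge\cdots\wedge v^{(n)}$). Substituting $(av^{(1)})\otimes z^{j_{1}}$ into the first slot and applying the symbol condition for $f$ on $X$ produces the two terms $a\,f(\vec v)$ and $\sigma(f)(v^{(2)}\wedge\cdots\wedge v^{(n)})(a)\,v^{(1)}$, each placed in degree $\sum_{l}j_{l}$ and re-expanded through the relation of $\int V$. On the other hand, substituting $\sum_{i}a_{i}(v^{(1)}\otimes z^{j_{1}+i})$ and using the polyderivation rule for $\widehat f$ against the scalars $a_{i}$ produces a principal term $\sum_{i}a_{i}\,f(\vec v)\otimes z^{\Sigma j+i}$, matching the first, together with a symbol term $\sum_{i}\sigma(\widehat f)(\cdots)(a_{i})\,v^{(1)}\otimes z^{j_{1}+i}$. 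The two symbol contributions agree precisely because the vector field $\int\sigma(f)(v^{(2)}\wedge\cdots)=\eta\otimes z^{m}$, with $m=\sum_{l\geq 2}j_{l}$, acts on $\mathcal{L}^{+}A$ by the prolongation rule $a_{i}\mapsto\eta(a)_{i-m}$ — exactly the explicit form of the isomorphism $\int\Theta_{A}\cong\Theta_{\mathcal{L}^{+}A}$ proved earlier — and the shift by $m$ reindexes the two sums to coincide. I expect this bookkeeping to be the main obstacle: everything hinges on the symbol vector fields prolonging to the loop space in this exact way, which is why the result is not a formal consequence of lax-monoidality alone.

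Finally I would record the routine verifications. That $\widehat f$ is alternating and $k$-multilinear is immediate from the formula, and that $(\widehat f,\sigma(\widehat f))$ satisfies the symbol condition holds by construction. For $\delta$-compatibility, since $\delta(v\otimes z^{j})=-j\,(v\otimes z^{j-1})$ and both $f$ and the basis vectors carry the trivial $\delta$-structure, a one-line Leibniz computation on generators gives $\delta\circ\widehat f=\widehat f\circ\delta$, the common value on $(e_{\alpha_{1}}\otimes z^{j_{1}})\wedge\cdots$ being $-(\sum_{l}j_{l})\,f(\vec e)\otimes z^{\Sigma j-1}$. Since the total output degree $\sum_{l}j_{l}$ is at least each input degree $j_{l}$, the map descends modulo every power of $z$, so it is a continuous map of the pro-systems $\{\int V\bmod z^{n+1}\}$ and hence lives in $Tate(\mathcal{L}^{+}_{\delta}X)$; naturality then lets it glue over $X$, completing the proof.
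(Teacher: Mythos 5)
Your proposal is correct, and it takes a genuinely more explicit (and more complete) route than the paper's own argument. The paper proves the lemma only for $n=1$, remarking that the general case is similar, and argues by characterization rather than construction: it prolongs the symbol $\sigma$ to the vector field $\widehat{\sigma}$ on $\mathcal{L}^{+}A$ (identified, as in your proof, with the image of $1\in\int A$ under $\int A\to\int\Theta_{A}$), asserts that having symbol $\widehat{\sigma}$ and respecting the $\delta$-structure leaves only one possible $\widehat{f}$, and then says ``one checks that this works''. You instead treat all $n$ at once, exhibit the map on generators, $(v_{1}\otimes z^{j_{1}})\wedge\cdots\wedge(v_{n}\otimes z^{j_{n}})\mapsto f(v_{1}\wedge\cdots\wedge v_{n})\otimes z^{j_{1}+\cdots+j_{n}}$, and carry out the verification the paper omits: your matching of the two symbol contributions via the prolongation rule $a_{i}\mapsto\eta(a)_{i-m}$ for $\eta\otimes z^{m}\in\int\Theta_{A}$ is exactly the content hiding behind the paper's ``one checks'', and it is precisely where the paper's earlier remark that the lemma is not formal gets cashed out; your computation is accurate, including the reindexing by $m$. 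What the paper's route buys is brevity and consonance with its universal-property philosophy; what yours buys is an actual formula, a uniform treatment of all $n$, and independence from the paper's uniqueness assertion, which as literally stated needs an extra normalization: for instance, multiplication by $ev_{z}(a)$ for $a\in A$ is an $\mathcal{L}^{+}A$-linear, $\delta$-compatible endomorphism of $\int A$ (since $(\delta_{A}-\partial_{z})\,ev_{z}(a)=0$), so symbol plus $\delta$-compatibility alone do not pin $\widehat{f}$ down, whereas your normalization on generators does.

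Two points to tighten. First, you choose a basis, but the lemma is stated for an arbitrary $A$-module $V$ (local freeness only enters later, for the Tate-object and duality statements); your key computation is in fact already basis-free, since it shows that the formula on generators $v\otimes z^{j}$ respects the defining relation $(av)\otimes z^{j}=\sum_{i}a_{i}(v\otimes z^{i+j})$, so you may define $\widehat{f}$ on all generators by the same formula and drop the basis --- this also removes any worry about order-independence when ``peeling off scalars one slot at a time''. Second, the parenthetical describing the formula as the pushforward along $\pi_{z}$ of the base change of $f$ should be flagged as motivation only: $f$ is not $A$-linear, so its base change along $ev_{z}$ is not defined a priori; its existence is exactly the well-definedness you then prove.
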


\begin{proof} We prove this for $n=1$ and remark that it is similar in general. So we are given a vector field $\sigma\in\Theta_{A}$ and a map $f:V\rightarrow V$ so that we have $f(av)=af(v)+\sigma(a)v$ identically. Let us first note that $\sigma$ extends uniquely to a derivation of $\mathcal{L}^{+}A$ in a manner compatible with $\delta$-structures. We denote the resulting vector field on $\mathcal{L}^{+}A$ by $\widehat{\sigma}$. We note that it is characterized as the image of the canonical section $1\in\int A$ under the map $\int A\rightarrow \int\Theta_{A}$. As such it is this $\widehat{\sigma}$ that will be our symbol map. Once we have stipulated this, there is only one way to define $\widehat{f}$ so that it has symbol $\widehat{\sigma}$ and repsects the $\delta$-structure. One checks that this works, and thus the lemma is proven.\end{proof}

\begin{corollary} Let $L$ be a locally free Lie algebroid on $X$. Then the Tate sheaf $\int L$ on $\mathcal{L}^{+}X$ is naturally a Lie algebroid with anchor map $$\int\rho:\int L\longrightarrow \int\Theta_{X}\cong\Theta_{\mathcal{L}^{+}X}.$$ \end{corollary}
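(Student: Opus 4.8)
The plan is to read the statement off the preceding polyderivation lemma in the special case $n = 2$, treating the Jacobi identity as the only assertion that is not purely formal. First I would recall, as in the remark following the definition of a polyderivation and in \cite{CM}, that the data of a Lie algebroid structure on $L$ is exactly that of a $2$-polyderivation pair $(f, \sigma(f)) = ([-,-], \rho)$ whose alternating bracket $f$ additionally satisfies the Jacobi identity. In this dictionary the symbol condition $f(a v_1 \wedge v_2) = a f(v_1 \wedge v_2) + \sigma(f)(v_2)(a) v_1$ is, up to the sign bookkeeping inherent in the alternating convention, nothing but the Leibniz rule $[l_1, a l_2] = a[l_1,l_2] + \rho(l_1)(a) l_2$ appearing in the definition of a Lie algebroid.

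I would then simply apply the polyderivation lemma to the pair $([-,-], \rho)$, obtaining a $\delta$-compatible $2$-polyderivation $(\widehat f, \sigma(\widehat f))$ on $\int L$. Because $\bigwedge^{n-1}$ is the identity functor when $n = 2$, the symbol $\sigma(\widehat f)$ is precisely the composite $\int L \to \int\Theta_X \cong \Theta_{\mathcal{L}^+X}$, that is, the anchor $\int\rho$ of the statement. The alternating property of $\widehat f$ gives antisymmetry of the candidate bracket, the symbol condition for $\widehat f$ gives its Leibniz compatibility with $\int\rho$, and $\int\rho$ is $\mathcal O$-linear since it is the image of the $\mathcal O$-linear map $\rho$ under the functor $\int = \pi_{z*}ev_z^{*}$. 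Comparing with the definition of a Lie algebroid recalled above, every axiom except the Jacobi identity for $\widehat f$ is now in hand.

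It therefore remains only to verify the Jacobi identity for $\widehat f$, and this is the heart of the argument. The mechanism I would use is the uniqueness established inside the proof of the polyderivation lemma: a $\delta$-compatible operation on $\int V$ is pinned down by its symbol, the $\delta$-structure propagating its values off the generators. Concretely, I would check that the Jacobiator $J(\ell_1, \ell_2, \ell_3) = \widehat f(\widehat f(\ell_1 \wedge \ell_2) \wedge \ell_3) + (\text{cyclic})$ is again a $3$-polyderivation of $\int L$, whose symbol is assembled from $\int\rho$ and $\widehat f$ in exactly the operadic fashion in which the symbol of the Jacobiator $J_L$ of the original bracket is assembled from $\rho$ and $f$. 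Granting that the hat construction intertwines this operadic composition, one gets $J_{\int L} = \widehat{J_L}$; since $L$ is a Lie algebroid $J_L = 0$, and $\widehat 0 = 0$, so $J_{\int L} = 0$, which is the Jacobi identity.

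The main obstacle is precisely this compatibility of the hat construction with operadic composition --- equivalently, the strengthened uniqueness statement that a $\delta$-compatible polyderivation on $\int V$ with vanishing symbol must vanish identically. The lemma as proved asserts only that a lift with a prescribed symbol is unique; upgrading this to kill a zero-symbol (hence $\mathcal O$-linear) $\delta$-compatible polyderivation, and first checking that the Jacobiator really does land among $3$-polyderivations so that the statement applies to it, is where the genuine work sits. Once that verification is in place the Jacobi identity, and with it the corollary, follows formally; if one prefers a more hands-on route, the same identities can be tested on the low-order generators and propagated by $\delta$, using the duality $\int L \cong (\mathcal{L}^+ L^\vee)^\vee$ to make the generators explicit.
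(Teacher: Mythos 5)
Your skeleton is the paper's own: the paper proves this corollary simply by invoking the preceding polyderivation lemma at $n=2$ (the remark citing \cite{CM} supplying the dictionary between Lie algebroid structures and $2$-polyderivations), so reading off antisymmetry, the Leibniz rule and the $\mathcal{O}$-linearity of $\int\rho$ formally, as you do, is exactly what is intended. You are also right --- and more careful than the paper, whose proof is literally ``this is now obvious'' --- that the Jacobi identity is the one assertion this formalism does not deliver for free.

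However, the mechanism you propose for Jacobi fails as stated. The ``strengthened uniqueness'' --- that a continuous $\delta$-compatible polyderivation of $\int V$ with vanishing symbol must vanish --- is false: take $V=\mathcal{O}$, so that locally $\int V$ is the pro-object $\{\mathcal{L}^{+}A[[z]]/z^{n+1}\}_{n}$ with $\delta=\delta_{A}-\partial_{z}$; then multiplication by $ev_{z}^{*}(a)=\sum_{i}a_{i}z^{i}$ is $\mathcal{L}^{+}A$-linear (zero symbol), continuous, and commutes with $\delta$ because $\delta(ev_{z}^{*}a)=0$, yet it is nonzero for nonconstant $a$. The same computation shows why ``propagating by $\delta$'' cannot work on $\int L$ itself: unlike $\mathcal{L}^{+}$-sheaves, where $m_{i}=\delta^{(i)}m_{0}$, the sheaf $\int L$ is \emph{not} $\delta$-generated by its pullback sections --- $\delta$ annihilates them --- and the closed $\mathcal{O}$-submodule they generate is a proper $\delta$-stable submodule (already for $A=k[x]$ it misses $l\otimes z$, since $z$-coefficients of its elements lie in the ideal $(x_{1})$). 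Two honest repairs exist. First, push the propagation to the dual side, as your last sentence gestures at: $\mathcal{L}^{+}L^{\vee}$ genuinely is $\delta$-generated by $L^{\vee}$, so an identity among continuous $\delta$-compatible operations on $\int L$ can be tested by pairing against $(L^{\vee})_{0}$ and evaluating on topological $\mathcal{O}$-module generators $l\otimes z^{j}$ of the arguments. Second, and more simply, bypass uniqueness and use the explicit formula that the lemma's proof actually constructs, namely the pullback-algebroid bracket
$$[\,l_{1}\otimes g_{1},\,l_{2}\otimes g_{2}\,]=[l_{1},l_{2}]\otimes g_{1}g_{2}+l_{2}\otimes g_{1}\widetilde{\rho}(l_{1})(g_{2})-l_{1}\otimes g_{2}\widetilde{\rho}(l_{2})(g_{1}),$$
where $\widetilde{\rho}(l)$ is the unique $z$-linear $\delta$-compatible extension of $\rho(l)$ to $\mathcal{L}^{+}A[[z]]$; Jacobi is then the same finite computation as for classical pullback Lie algebroids, using Jacobi on $L$ and $\widetilde{\rho}([l_{1},l_{2}])=[\widetilde{\rho}(l_{1}),\widetilde{\rho}(l_{2})]$. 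With either repair your argument closes; without one, the step you yourself flag as the heart of the matter rests on a principle that is not merely unproved but false.
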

 \begin{proof} This is now obvious. \end{proof}

\begin{remark} Given a Lie algebroid, $L$, on a space $X=spec(A)$ one can form its complex of so called \emph{de Rham- Lie} cochains. This is simply a Lie cohomology type differential on the graded space $sym_{A}(L^{\vee}[-1])$.We denote it $C^{*}_{dR,Lie}(X,L)$. For example if $L$ is the tautological Lie algebroid $\Theta_{X}$ we obtain de Rham cohomology. When the module $L$ is given a suitable topology it is of course better to take continuous cochains. In our case, we see that given a locally free Lie algebroid $L$, we obtain a natural complex $C^{*,cont}_{dR,Lie}(\mathcal{L}^{+}X,\int L)$. Further we note that by above the underlying graded module of this complex is $sym_{\mathcal{L}^{+}A}(\mathcal{L}^{+}L^{\vee})$ and that there is a $\delta$-structure on the complex. We will explain in the next section how these Lie algebroid cohomology complexes describe deformations of the Poisson vertex structure of the loop space into a Poisson variety. \end{remark}

\begin{definition} Let $L$ be a locally free Lie algebroid on a smooth variety $X$. We define the \emph{big loop de Rham-Lie} cochain complex of $L$ by $$\mathcal{L}^{+}C^{*}_{dR,Lie}(X,L):=C^{*,cont}_{dR,Lie}(\mathcal{L}^{+}X,\int L).$$ We then define the \emph{loop de Rham-Lie} cochain complex as the $\delta$-reduced cochain complex of the above. Explicitly we set $$\mathcal{L}^{+}_{\delta}C^{*}_{dR,Lie}(X,L):=C^{*,cont}_{dR,Lie}(\mathcal{L}^{+}X,\int L)/\delta.$$ \end{definition} 

\section{Poisson Vertex Geometry} \subsection{Definitions and Examples} We give an extremely brief introduction to the theory of PVAs, largely to fix some notation. The reader who wishes to see a detailed exposition is referred to \cite{BdSHK}. \begin{definition} A commutative algebra $A$ with derivation $\delta$ is called a \emph{Poisson vertex algebra} if it is endowed with a \emph{lambda bracket} $$\{\,_{\lambda}\,\}:A\otimes A\longrightarrow A[\lambda],$$ subject to the following axioms; \begin{enumerate}\item $\delta$ is a derivation for the bracket: $\delta\{a_{\lambda}b\}=\{\delta a_{\lambda}b\}+\{a_{\lambda}\delta b\}$.\item Sesquilinearity: we have $\{\delta a_{\lambda}b\}=-\lambda\{a_{\lambda}b\}.$\item Skew-symmetry: $\{a_{\lambda}b\}=-\{b_{-\delta-\lambda}a\}$.\item Jacobi: $\{\{a_{\lambda}b\}_{\lambda+\mu}c\}=\{a_{\lambda}\{b_{\mu}c\}\}-\{b_{\mu}\{a_{\lambda}c\}\}$.\item Leibniz: $\{a_{\lambda}-\}$ is a derivation of the commutative product on $A$.\end{enumerate} There is an evident category of such algebras which we denote $PVAlg_{k}$.\end{definition}

\begin{remark} This may look somewhat intimidating. It may help to first get a grasp on the properties of the bracket $\{_{\lambda}\}$ on the underlying vector space of $A$ with its $\delta$-structure. The axioms in this case make $\{_{\lambda}\}$ into a \emph{Lie conformal} algebra. Such objects have a nice interpretation as Lie objects in a suitable pseudo-$\otimes$ category of $k[\delta]$-modules, as is pointed out in the book of Beilinson-Drinfeld, \cite{BD}. Further, a Lie conformal algebra admits a universal enveloping PVA, and so these can be used to generate examples of PVAs. Another source of examples comes from the associated graded of a vertex algebra with respect to the so-called \emph{Li}-filtration, see for reference \cite{H}. We will not define vertex algebras in this note as we do not think it is needed and the definition is somewhat technical.\end{remark}

\begin{definition} A Poisson vertex scheme (PV scheme) is a scheme $X$ whose structure sheaf is endowed with the structure of a sheaf of PVAs. There is an evident category of such spaces which we denote $PVSch_{k}$. \end{definition} \begin{remark} Note that the structure of a PV scheme on $X$ automatically endows $X$ with the structure of a $\delta$-scheme.\end{remark}

\begin{lemma} Let $X$ be a PV scheme and let $X^{\delta}$ be the subscheme of zeroes of $\delta$. Then $X^{\delta}$ is naturally a Poisson scheme with bracket locally defined by $\{_{\lambda}\}_{\lambda=0}$. \end{lemma}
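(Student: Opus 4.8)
The plan is to work locally, writing $A=\mathcal{O}(U)$ for a PVA and identifying $X^{\delta}$ locally with $\mathrm{spec}(A/I)$, where $I$ is the ideal generated by the image $\delta(A)$ (the vanishing ideal of the section $\delta\in\Theta_{A}=Map_{A}(\Omega^{1}_{A},A)$). I would first record that $\delta(I)\subseteq I$ and that $\delta$ descends to the zero derivation on $A/I$, since $\delta(a)\in I$ for every $a$; thus $A/I$ is the universal $\delta$-trivial quotient of the earlier remark and carries trivial $\delta$-structure. The candidate bracket is $\{\bar a,\bar b\}:=\overline{\{a_{\lambda}b\}|_{\lambda=0}}$, and the entire content of the lemma is that this descends to $A/I$ and obeys the Poisson axioms. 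Note that $\{a_{\lambda}b\}|_{\lambda=0}$ is \emph{not} a Poisson bracket on $A$ itself; passing to $A/I$ is exactly what repairs the failure.

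The computational heart is extracting the $\lambda=0$ part of the PVA axioms. Writing $\{a,b\}_{0}:=\{a_{\lambda}b\}|_{\lambda=0}$, sesquilinearity gives $\{\delta a_{\lambda}b\}|_{\lambda=0}=-\lambda\{a_{\lambda}b\}|_{\lambda=0}=0$, and combining axioms (1) and (2) yields $\{a_{\lambda}\delta b\}|_{\lambda=0}=\delta(\{a,b\}_{0})$; the key point is that the latter lies in $\delta(A)\subseteq I$. To see the bracket descends I would establish $\{A,I\}_{0}\subseteq I$ and $\{I,A\}_{0}\subseteq I$. For the first, the Leibniz axiom (5) gives $\{a, f\,\delta(h)\}_{0}=\{a,f\}_{0}\,\delta(h)+f\,\{a,\delta(h)\}_{0}$, and both summands lie in $I$ by the previous line; since $I$ is generated by the elements $f\,\delta(h)$, this handles $\{A,I\}_{0}$.

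The main obstacle is the behaviour of skew-symmetry at $\lambda=0$, which also supplies antisymmetry modulo $I$ and hence $\{I,A\}_{0}\subseteq I$. Writing $\{b_{\mu}a\}=\sum_{n\ge0}\mu^{n}B_{n}$, the axiom $\{a_{\lambda}b\}=-\{b_{-\delta-\lambda}a\}$ gives at $\lambda=0$
$$\{a,b\}_{0}=-\sum_{n\ge0}(-\delta)^{n}B_{n}=-\{b,a\}_{0}-\sum_{n\ge1}(-1)^{n}\delta^{n}(B_{n}),$$
and since every term of the final sum lies in $\delta(A)\subseteq I$ we obtain $\{a,b\}_{0}+\{b,a\}_{0}\in I$. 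Combined with $\{A,I\}_{0}\subseteq I$ this forces $\{I,A\}_{0}\subseteq I$, so the induced bracket is well defined and antisymmetric on $A/I$. I expect this to be the most delicate step, since one must track the convention by which $\delta$ acts on the coefficients $B_{n}$ inside $\{b_{-\delta-\lambda}a\}$ and verify that precisely the higher coefficients $B_{n\ge1}$ (those carrying a $\delta$) are the error terms.

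Finally, the Leibniz and Jacobi identities on $A/I$ are comparatively painless because they already hold \emph{before} quotienting. Setting $\lambda=0$ in axiom (5) gives the exact identity $\{a,bc\}_{0}=\{a,b\}_{0}\,c+\{a,c\}_{0}\,b$, which descends and, together with antisymmetry mod $I$, also yields a derivation in the first slot. For Jacobi I would set $\lambda=\mu=0$ in axiom (4): the prefactors $\lambda^{n},\mu^{n}$ kill all but the constant coefficients of the inner brackets, leaving the honest identity $\{\{a,b\}_{0},c\}_{0}=\{a,\{b,c\}_{0}\}_{0}-\{b,\{a,c\}_{0}\}_{0}$ in $A$, which descends verbatim and is the Jacobi identity for the Poisson bracket. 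Since the bracket is built canonically from the sheaf of PVAs, these local constructions glue, endowing $X^{\delta}$ with the asserted Poisson structure.
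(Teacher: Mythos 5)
Your proposal is correct and takes the same route as the paper, whose entire proof is the assertion that well-definedness and the Poisson axioms ``can be easily checked from the axioms'' --- your write-up supplies exactly that check. In particular, your two key observations --- that $\{a_{\lambda}b\}|_{\lambda=0}$ is antisymmetric only modulo the ideal $I=(\delta(A))$ because of the $\delta$-dependent terms in the skew-symmetry axiom, while the Leibniz and Jacobi identities at $\lambda=\mu=0$ hold exactly in $A$ and descend --- are precisely the content the paper leaves implicit, and your verification of them is sound.
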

\begin{proof} That this is well defined and gives a Poisson structure can be easily checked from the axioms. \end{proof}

The following lemma is due to Arakawa, \cite{A}.\begin{lemma} Let $(X,\pi)$ be a Poisson scheme, then the scheme $\mathcal{L}^{+}X$ is naturally endowed with the structure of a PV scheme whose underlying $\delta$-scheme is simply $\mathcal{L}^{+}_{\delta}X,$, and so that the induced Poisson structure on $X=(\mathcal{L}^{+}_{\delta}X)^{\delta}$ is just $\pi$. \end{lemma}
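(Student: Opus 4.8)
The plan is to reduce immediately to the local affine situation and to construct the lambda bracket on $\mathcal{L}^{+}A$ by hand, where $A=\mathcal{O}(X)$ carries the Poisson bracket $\{-,-\}$ associated to $\pi$. Recall that $\mathcal{L}^{+}A$ is generated as a commutative $k$-algebra by the symbols $a_{i}=\delta_{A}^{(i)}(a_{0})$ for $a\in A$, where $a_{0}$ denotes the image of $a$ under the unit $A\rightarrow\mathcal{L}^{+}A$; in particular $A$ is a set of \emph{differential} generators for the $\delta$-algebra $(\mathcal{L}^{+}A,\delta_{A})$. The guiding principle is that a PVA structure on a differential algebra compatible with a prescribed derivation is entirely determined by its lambda bracket on differential generators, via sesquilinearity (axioms 1 and 2, together with their consequence $\{a_{\lambda}\delta b\}=(\lambda+\delta)\{a_{\lambda}b\}$) and the Leibniz rule (axiom 5). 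I will therefore \emph{define} the bracket on the generators and extend.

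First I would set, for $a,b\in A$,
$$\{{a_{0}}_{\lambda}b_{0}\}:=\{a,b\},$$
viewed as an element of $A\subset\mathcal{L}^{+}A$ that is constant in $\lambda$, and then extend this to all of $\mathcal{L}^{+}A$ in the unique way forced by sesquilinearity and Leibniz. Well-definedness of the seed is exactly the statement that $\{-,-\}$ is a biderivation on $A$, so the prescription is compatible with the relations $(ab)_{n}=\sum a_{i}b_{n-i}$; and the extension respects the defining relations of $\mathcal{L}^{+}A$ together with the $\delta$-structure by construction. Thus the underlying $\delta$-scheme of the resulting PV structure is $\mathcal{L}^{+}_{\delta}X$, as required.

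The substance of the lemma is the verification of the remaining axioms. Axioms 1, 2 and 5 hold by the very definition of the extension. Skew-symmetry and Jacobi I would first check on the generators, where they collapse to elementary identities: since $\{{a_{0}}_{\lambda}b_{0}\}=\{a,b\}$ is independent of $\lambda$ and lands in $A$, skew-symmetry reduces to the antisymmetry $\{a,b\}=-\{b,a\}$, and the Jacobi axiom reduces (using again that inner brackets of elements of $A$ lie in $A$ and are $\lambda$-independent) to the Leibniz form $\{\{a,b\},c\}=\{a,\{b,c\}\}-\{b,\{a,c\}\}$ of the Poisson Jacobi identity. The main obstacle is then the passage from generators to all of $\mathcal{L}^{+}A$: one must show that skew-symmetry and Jacobi, once established on differential generators, propagate to arbitrary elements under sesquilinear and Leibniz extension. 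This is precisely the reconstruction/extension principle for lambda brackets (the master-formula bookkeeping of \cite{BdSHK}), and it is the only genuinely calculational point; I would invoke it rather than reproduce the induction by hand.

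Finally I would globalize and identify the induced Poisson structure. The construction is manifestly natural in $A$ and compatible with localization, so the local lambda brackets glue to a sheaf of PVAs on $\mathcal{L}^{+}X$, furnishing the PV scheme structure. For the last assertion, note that the zero scheme of $\delta_{A}$ is cut out by the ideal $(a_{i}:i\geq 1)=(\delta_{A}(a_{j}))$, so that $\mathcal{O}((\mathcal{L}^{+}_{\delta}X)^{\delta})\cong A$ via $a_{0}\mapsto a$. By the earlier lemma the induced Poisson bracket is $\{-,-\}_{\lambda=0}$ read modulo this ideal, and since $\{{a_{0}}_{\lambda}b_{0}\}=\{a,b\}$ is already constant in $\lambda$ and lies in $A$, this recovers exactly $\{a,b\}$, i.e. the structure $\pi$.
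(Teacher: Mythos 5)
Your proposal is correct and takes essentially the same approach as the paper: work locally, seed the bracket on generators by $\{a_{\lambda}b\}:=\{a,b\}$ for $a,b\in A$, extend uniquely via sesquilinearity and Leibniz (the paper encodes this in the forced formula $\{(a_{i})_{\lambda}(b_{j})\}=\delta^{(i)}(-\lambda-\delta)^{(j)}\{a_{\lambda}b\}$, appealing to the universal $\delta$-algebra characterization of $\mathcal{L}^{+}_{\delta}A$), then verify the axioms and that the induced Poisson structure on the $\delta$-fixed locus is $\pi$. The paper is just as brief as you are about the remaining axiom checks, so your appeal to the master-formula extension principle of \cite{BdSHK} is entirely in the same spirit.
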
 \begin{proof} We work locally, where this is an easy computation on account of the characterization of $\mathcal{L}^{+}_{\delta}A$ as a universal $\delta$ algebra. Let $\{,\}$ denote the Poisson bracket on $\mathcal{O}_{X}:=A$. Indeed we must have $$\{(a_{i})_{\lambda}(b_{j})\}=\delta^{(i)}({-\lambda}{-\delta})^{(j)}\{a_{\lambda}b\}$$ from the axioms for a PVA. It follows that defining $\{a_{\lambda}b\}:=\{a,b\}$ for $a,b\in A$ we there is a unique way to extend $\{_{\lambda}\}$ so that the Leibniz rule holds. One then simply checks that various axioms all hold and that the induced Poisson structure on $X$ is given by $\pi$, which is easy, and we are done. \end{proof}

\subsection{PVA Cohomology Complex} In order to define the PVA complex we first define the complex controlling deformations of a Lie conformal algebra. We denote the category of Lie conformal algebras $Lie^{*}_{k}$, following \cite{BD}. \begin{remark} We note that as $L\in Lie^{*}_{k}$ is a Lie algebra object in a suitable pseudo-$\otimes$ category there is actually a formal method to produce Chevalley-Eilenberg cochains on it from this. Indeed, in a symmetric monoidal category one can define multilinear maps from a finite collection $\{V_{i}\}_{i\in I}$ to an object $W$ as $$Mult(\{V_{i}\}_{i\in I},W):=Hom(\bigotimes_{i\in I}V_{i},W).$$ Forgetting the tensor product and axiomatizing the properties of these multinear maps one obtains the notion of a pseudo-tensor category. It is then not hard to define a Lie algebra object in such, as well as the Chevalley-Eilenberg cochain complex associated to it. See \cite{BD} for details.\end{remark}

\begin{definition} Let $L\in Lie^{*}_{k}$ be a Lie conformal algebra with Lie bracket $[\,_{\lambda}\,]$. The Lie conformal complex associated to $L$, denoted $C^{*}_{Lie^{*}}(L,L)$, is defined to have $n$-cochains maps $$Y:L^{\otimes n}\longrightarrow L[\lambda_{1},...,\lambda_{n}]/(\delta+\lambda_{1}+...+\lambda_{n}).$$ These are required to satisfy the sesquilinearity condition $$Y_{\lambda_{1},...,\lambda_{n}}(a_{1}\otimes...\otimes\delta a_{i}\otimes...\otimes a_{n})={-\lambda_{i}}Y_{\lambda_{1},...,\lambda_{n}}(a_{1}\otimes...\otimes a_{n}),$$ and the anti-symmetry condition $$Y_{\lambda_{1},...,\lambda_{i},\lambda_{i+1},...,\lambda_{n}}(a_{1}\otimes...\otimes a_{i}\otimes a_{i+1}\otimes...\otimes a_{n})=-Y_{\lambda_{1},...,\lambda_{i+1},\lambda_{i},...,\lambda_{n}}(a_{1}\otimes...\otimes a_{i+1}\otimes a_{i}\otimes...\otimes a_{n}).$$ The differential is defined by $$(dY)_{\lambda_{1},...,\lambda_{n}}(a_{1}\otimes...\otimes a_{n})= \sum_{i}(-1)^{i}[a_{i}\,_{\lambda_{i}}Y_{\lambda_{1},...,\widehat{\lambda_{i}},...,\lambda_{n}}(a_{1}\otimes...\otimes\widehat{a_{i}}\otimes...\otimes a_{n})]$$  $$+ \sum_{i,j}(-1)^{i+j}Y_{\lambda_{i}+\lambda_{j},\lambda_{0},...\widehat{\lambda_{i}},...,\widehat{\lambda_{j}},...,\lambda_{n}}([a_{i}\,_{\lambda_{i}}a_{j}]\otimes a_{0}\otimes...\otimes\widehat{a_{i}}\otimes...\otimes\widehat{a_{j}}\otimes...\otimes a_{n}).$$\end{definition}

 That this indeed squares to zero is proven in \cite{BKV}, the cohomology spaces will henceforth be denoted $H^{*}_{LC}(L,L)$. We remark also that there is a generalization of this definition where one takes cohomology in a representation of a Lie conformal algebra. To justify the name given to this complex one should prove that the low degree cohomology spaces have the interpretations one would expect, we refer to \cite{BKV} and \cite{DSK} for proofs that they really do have such interpretations. In particular they prove the following theorem; \begin{theorem} (\cite{BKV},\cite{DSK}). $H^{2}_{LC}(L)$ is the space of first order infinitesimal deformations of $L\in Lie^{*}_{k}$, which are trivial as deformations of the underlying $k[\delta]$-module. \end{theorem}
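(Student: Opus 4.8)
The plan is to run the standard deformation-cohomology argument, adapted to the Lie conformal setting, identifying first-order deformations that are trivial on the $k[\delta]$-module with $2$-cocycles, and gauge equivalences between them with $2$-coboundaries. I would work over the base $k[t]/(t^2)$ and write an arbitrary such first-order deformation of the bracket as $[\,_\lambda\,]_t = [\,_\lambda\,] + t\,Y_\lambda$, where $Y : L\otimes L \to L[\lambda]$ is a bilinear correction term, the $k[\delta]$-module structure (and hence $\delta$ itself) being held fixed. The content of the theorem is then precisely that the Lie conformal axioms, imposed on $[\,_\lambda\,]_t$ and expanded to first order in $t$, are equivalent to the condition that $Y$ be a $2$-cocycle of $C^{*}_{Lie^{*}}(L,L)$, and that changing $Y$ by a gauge transformation fixing the module amounts to changing it by a coboundary.

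First I would dispose of the two ``linear'' axioms. Sesquilinearity of $[\,_\lambda\,]_t$ holds to order $t$ exactly when $Y_\lambda(\delta a \otimes b) = -\lambda\, Y_\lambda(a\otimes b)$, which is the sesquilinearity condition in the definition of an $n=2$ cochain. Likewise skew-symmetry $[a_\lambda b]_t = -[b_{-\delta-\lambda}a]_t$ reads, at first order, $Y_\lambda(a\otimes b) = -Y_{-\delta-\lambda}(b\otimes a)$; after the identification $\lambda_2 \equiv -\delta-\lambda_1$ coming from the defining ideal $(\delta+\lambda_1+\lambda_2)$, this is exactly the anti-symmetry condition $Y_{\lambda_1,\lambda_2}(a_1\otimes a_2) = -Y_{\lambda_2,\lambda_1}(a_2\otimes a_1)$. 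Thus the corrections $Y$ preserving these two axioms are precisely the elements of $C^{2}_{Lie^{*}}(L,L)$.

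The substance is the Jacobi identity. Expanding $[a_\lambda [b_\mu c]]_t - [b_\mu[a_\lambda c]]_t = [[a_\lambda b]_{\lambda+\mu}c]_t$ in powers of $t$, the order-$t^0$ term vanishes because the undeformed bracket is already Jacobi. The order-$t^1$ term is a sum of ``outer'' contributions of the shape $[a_{\lambda_i}\,Y(\cdots)]$ and ``inner'' contributions of the shape $Y_{\lambda_i+\lambda_j}([a_i\,_{\lambda_i}a_j]\otimes\cdots)$, and I would check that with the correct signs and spectral-parameter shifts this is precisely $(dY)_{\lambda_1,\lambda_2,\lambda_3}(a_1\otimes a_2\otimes a_3)$, the $n=3$ value of the differential. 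Hence $[\,_\lambda\,]_t$ is Jacobi to first order if and only if $dY=0$. I expect the main obstacle to be exactly this bookkeeping: matching the expanded Jacobi terms to the explicit three-term differential requires tracking the signs $(-1)^i$ and $(-1)^{i+j}$ and the shifted arguments $\lambda_i+\lambda_j$, all while consistently interpreting the outputs modulo $(\delta+\lambda_1+\cdots+\lambda_n)$ — and it is the sesquilinearity enforced in the previous step that makes these reductions well defined.

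Finally I would treat equivalences. A gauge transformation fixing the $k[\delta]$-module is an automorphism $\mathrm{id}+t\,\phi$ with $\phi$ a $1$-cochain, i.e.\ (after the identification $\lambda_1\equiv-\delta$) a $k[\delta]$-linear endomorphism of $L$. Conjugating $[\,_\lambda\,]$ by it alters $Y$ by the term $a\otimes b \mapsto [\phi(a)_\lambda b] + [a_\lambda \phi(b)] - \phi([a_\lambda b])$, and one verifies this equals $\pm\,d\phi$ for the $n=2$ differential. Therefore two first-order deformations trivial on the module are gauge equivalent precisely when their cocycles differ by a coboundary, which yields the asserted bijection between such deformations modulo equivalence and $H^{2}_{LC}(L)$.
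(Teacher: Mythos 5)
The paper contains no proof of this statement to compare against: it is quoted as an external result, with the proof deferred to \cite{BKV} and \cite{DSK}. Your argument is the standard deformation-cohomology argument, and it is in essence the argument of those references: deform over $k[t]/(t^2)$ holding the $k[\delta]$-module fixed, identify the correction term $Y$ with a $2$-cochain (sesquilinearity and skew-symmetry of the deformed bracket giving exactly the cochain conditions, via the identification $\lambda_2\equiv-\delta-\lambda_1$ in $L[\lambda_1,\lambda_2]/(\delta+\lambda_1+\lambda_2)\cong L[\lambda_1]$), match first-order Jacobi with $dY=0$, and match gauge transformations $\mathrm{id}+t\phi$, $\phi$ a $k[\delta]$-linear endomorphism (equivalently a $1$-cochain, since $\lambda_1\equiv-\delta$ forces sesquilinearity to become $\phi\delta=\delta\phi$), with coboundaries. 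All of these identifications are the correct ones.

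Two remarks on what you left open. First, the entire substance of the theorem sits in the two computations you defer (``I would check that\ldots this is precisely $(dY)$'' and ``one verifies this equals $\pm d\phi$''); these are mechanical but they are the proof, so in a complete write-up you would need to carry them out, expanding $\{\{a_\lambda b\}_{\lambda+\mu}c\}_t=\{a_\lambda\{b_\mu c\}_t\}_t-\{b_\mu\{a_\lambda c\}_t\}_t$ and comparing term by term with the displayed differential. Second, your hedge on the sign of $d\phi$ is understandable but should be resolved: note that the differential as printed in the paper mixes $0$-indexed and $1$-indexed arguments ($a_0$, $\lambda_0$ appear inside sums over $i,j$ that implicitly start at $1$), so pinning down the sign requires first fixing a consistent convention; with the standard Chevalley--Eilenberg convention the gauge correction $[\phi(a)_\lambda b]+[a_\lambda\phi(b)]-\phi([a_\lambda b])$ is exactly $d\phi$, with no sign ambiguity.
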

\begin{remark} It is now fairly clear how one should define the PVA complex. Indeed consider the case of classical Poisson algebras. One can construct the Poisson cohomology complex by taking the sub-complex of the Lie cohomology complex of the underlying Lie algebra and then restrict to the cochains which are polyderivations (of course one should check that the differential preserves this subcomplex.)\end{remark}

\begin{lemma} Let $A$ be a PVA. Then the subspace, denoted $C^{*}_{PV}(A,A)$ of $C^{*}_{LC}(A,A)$ formed by polydifferential operators is preserved by the differential. \end{lemma}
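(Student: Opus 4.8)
The plan is to first pin down what a \emph{polydifferential} cochain is, and then to show that each of the two sums in the formula for $dY$ is again polydifferential whenever $Y$ is. By a polydifferential $n$-cochain I mean one for which, after fixing all but one of its arguments, the resulting assignment $a_{k}\mapsto Y_{\lambda_{1},\ldots,\lambda_{n}}(a_{1}\otimes\cdots\otimes a_{n})$ is a differential operator of finite order for the commutative product on $A$, with coefficients polynomial in the $\lambda_{i}$ (of order bounded independently of the remaining arguments). This is the local/variational condition analogous to cutting the Poisson complex out of the Lie-algebra complex by restricting to polyderivations, as in the preceding remark. Since $d$ is $k$-linear and the polydifferential cochains form a subspace stable under the permutation symmetries, it suffices to treat the two sums separately and to verify the differential-operator property one argument at a time. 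I will lean on two elementary facts about the commutative algebra $A$: (i) differential operators are closed under composition, orders adding; and (ii) a derivation is a first-order differential operator, so that composing a differential operator with a derivation again yields a differential operator. From the PVA axioms I will use only Leibniz (axiom 5), which makes $\{a_{\lambda}-\}$ a derivation of the product, hence first-order in its \emph{second} slot, together with skew-symmetry (axiom 3), $\{a_{\lambda}b\}=-\{b_{-\delta-\lambda}a\}$, which transfers this to the \emph{first} slot.

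For the first sum $\sum_{i}(-1)^{i}[a_{i}\,_{\lambda_{i}}Y_{\ldots}(\cdots\widehat{a_{i}}\cdots)]$ I would argue as follows. Take an argument $a_{k}$ with $k\neq i$: it enters only through $Y$, and fixing the others the map $a_{k}\mapsto Y_{\ldots}(\cdots)$ is a differential operator $A\to A[\lambda]$; post-composing with the derivation $\{a_{i}\,_{\lambda_{i}}-\}$ and invoking (ii) shows the summand is still a differential operator in $a_{k}$. For the remaining argument $a_{i}$, which appears only as the first slot of the bracket, I would use skew-symmetry to rewrite $\{a_{i}\,_{\lambda_{i}}c\}=-\{c_{-\delta-\lambda_{i}}a_{i}\}$ and then apply Leibniz to the now-second argument; this exhibits $a_{i}\mapsto\{a_{i}\,_{\lambda_{i}}c\}$ as a first-order differential operator in $a_{i}$, the $\delta$- and $\lambda$-shifts being absorbed into the coefficients.

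For the second sum $\sum_{i,j}(-1)^{i+j}Y_{\ldots}([a_{i}\,_{\lambda_{i}}a_{j}]\otimes\cdots)$ I would first note that Leibniz and skew-symmetry make the $\lambda$-bracket a bidifferential operator, of order one in each of $a_{i}$ and $a_{j}$. Since $Y$ is a differential operator in the slot into which the bracket is inserted, fixing $a_{j}$ the map $a_{i}\mapsto[a_{i}\,_{\lambda_{i}}a_{j}]$ is a differential operator $A\to A[\lambda]$, and composing with $Y$ in that slot and using (i) shows the result is a differential operator in $a_{i}$; symmetrically in $a_{j}$. The untouched arguments $a_{0},\ldots$ pose no difficulty, as $Y$ remains a differential operator in each. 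Combining the two sums, $dY$ is polydifferential, so $C^{*}_{PV}(A,A)$ is preserved by $d$.

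The genuinely non-formal point, and the one I would treat most carefully, is the dependence of the first sum on $a_{i}$: the Leibniz axiom controls only the second slot of the $\lambda$-bracket, so that $a\mapsto\{a_{\lambda}b\}$ is a differential operator in $a$ is not immediate and must be extracted from skew-symmetry. One then has to check that the substitution $\lambda\mapsto-\delta-\lambda$ enters purely through polynomial coefficients, compatibly with the quotient by $(\delta+\lambda_{1}+\cdots+\lambda_{n})$, and does not inflate the differential order. The second sum's content is the closure of differential operators under composition; once the $\lambda$-dependence is absorbed into coefficients these verifications are routine.
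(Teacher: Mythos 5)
The paper itself does not prove this lemma; its ``proof'' is the citation to \cite{BdSHK}, so your attempt must be judged on its own merits rather than against an internal argument. Your architecture is reasonable --- treat the two sums in $dY$ separately, verify the differential-operator property one slot at a time, and isolate the first slot of the $\lambda$-bracket as the dangerous case --- and you have correctly located the crux. But your resolution of that crux contains a genuine error: it is false that skew-symmetry plus Leibniz exhibits $a\mapsto\{a_{\lambda}c\}$ as a first-order (or even finite-order) differential operator $A\to A[\lambda]$ with ``the $\delta$- and $\lambda$-shifts absorbed into the coefficients.'' Writing $\{c_{\mu}a\}=\sum_{k}\mu^{k}c_{(k)}(a)$, skew-symmetry gives $\{a_{\lambda}c\}=-\sum_{k}(-\lambda-\delta)^{k}\bigl(c_{(k)}(a)\bigr)$, and the powers of $\delta$ land on the argument-dependent factor $c_{(k)}(a)$: they cannot be pulled out as coefficients, and they inflate the order without bound because the $\mu$-degree of $\{c_{\mu}a\}$ grows with $a$. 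Concretely, for the Gardner--Faddeev--Zakharov bracket $\{u_{\lambda}u\}=\lambda$ on $A=k[u,u',u'',\dots]$ one has $\{a_{\lambda}u\}=-\sum_{m}(-\lambda-\delta)^{m+1}\,\partial a/\partial u^{(m)}$; restricting to $a=g(u^{(m)})$, the $\lambda^{0}$-coefficient contains the term $g^{(m+2)}(u^{(m)})\,(u^{(m+1)})^{m+1}$, so this map has order at least $m+2$ for every $m$, i.e.\ infinite order. The same false premise underlies your treatment of the second sum, where you assert that $a_{i}\mapsto[a_{i}\,_{\lambda_{i}}a_{j}]$ is a differential operator $A\to A[\lambda]$ before composing with $Y$.

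What rescues the lemma --- and what a correct proof must actually use --- is that the $\delta$-shifts get converted into multiplications by $\lambda$'s, by two mechanisms you never deploy. For the first sum, the target of $dY$ is the quotient $A[\lambda_{1},\dots,\lambda_{n}]/\langle\delta+\lambda_{1}+\cdots+\lambda_{n}\rangle$, in which the operator $-\lambda_{i}-\delta$ acts as multiplication by $\sum_{j\neq i}\lambda_{j}$; hence $\{a_{i}\,_{\lambda_{i}}c\}\equiv-\sum_{k}\bigl(\sum_{j\neq i}\lambda_{j}\bigr)^{k}c_{(k)}(a_{i})$, which is an honest derivation in $a_{i}$ because each coefficient $c_{(k)}$ is one, by Leibniz. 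For the second sum the quotient does not directly help, since the bracket is plugged into $Y$ rather than sitting in the target, but sesquilinearity of $Y$ in that slot does the same job: $Y_{\mu,\dots}\bigl((-\lambda_{i}-\delta)^{k}b\otimes\cdots\bigr)=\lambda_{j}^{k}\,Y_{\mu,\dots}(b\otimes\cdots)$ for $\mu=\lambda_{i}+\lambda_{j}$, so the composite equals $-\sum_{k}\lambda_{j}^{k}\,Y_{\lambda_{i}+\lambda_{j},\dots}\bigl((a_{j})_{(k)}(a_{i})\otimes\cdots\bigr)$, a locally finite sum of compositions of $Y$ with derivations, hence of order at most $\mathrm{ord}(Y)+1$ in $a_{i}$. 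You do gesture at ``compatibility with the quotient by $(\delta+\lambda_{1}+\cdots+\lambda_{n})$'' as a routine final check, but it is not routine: without these two conversions the statement you rely on is simply false, so the proof as written has a gap exactly at the point you yourself singled out as the non-formal one.
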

\begin{proof} See \cite{BdSHK}. \end{proof}

\begin{remark}\begin{itemize} \item Denoting the cohomology of this complex $H_{PV}^{*}(A,A)$, we have as expected an identification of $H^{2}_{PV}(A,A)$ as the space of first order infinitesimal deformations of $A$ which are trivial as deformations of the underlying $\delta$-algebra.\item For a PV scheme $X$ we denote the hypercohomology of the complex of sheaves $C^{*}_{PV}(\mathcal{O},\mathcal{O})$ by $H^{*}_{PV}(X,\mathcal{O})$\end{itemize}\end{remark}

\begin{theorem} Let $(M,\pi)$ be a smooth Poisson variety with non-degenerate $\pi$. Then there is an isomorphism: $$H^{*}_{PV}(\mathcal{L}^{+}M,\mathcal{O})\cong H^{*}_{dR}(M).$$  \end{theorem}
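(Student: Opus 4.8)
The plan is to factor the computation through the de Rham--Lie cohomology of the Poisson cotangent Lie algebroid and then use non-degeneracy to collapse everything onto an ordinary de Rham complex. The first and most substantial step is to make precise the assertion, foreshadowed earlier, that the Poisson vertex complex of $\mathcal{L}^{+}M$ is the $\delta$-reduced continuous de Rham--Lie complex of a Lie algebroid. Concretely, I would equip $\Omega^{1}_{M}$ with its Poisson (cotangent) Lie algebroid structure: anchor $\pi^{\sharp}:\Omega^{1}_{M}\to\Theta_{M}$, $\alpha\mapsto\pi(\alpha,-)$, and Koszul bracket $[\alpha,\beta]_{\pi}=\mathcal{L}_{\pi^{\sharp}\alpha}\beta-\mathcal{L}_{\pi^{\sharp}\beta}\alpha-d\,\pi(\alpha,\beta)$. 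By the Corollary, $\int\Omega^{1}_{M}$ is then a Lie algebroid on $\mathcal{L}^{+}M$, and I would prove a local isomorphism of complexes $C^{*}_{PV}(\mathcal{L}^{+}M,\mathcal{O})\cong\mathcal{L}^{+}_{\delta}C^{*}_{dR,Lie}(M,\Omega^{1}_{M})$ (in the Poisson structure), globalizing to an isomorphism of the associated complexes of sheaves and hence of hypercohomologies. The content here is to match, cochain by cochain, the polydifferential $\lambda$-bracket cochains defining $C^{*}_{PV}$ with continuous Lie-algebroid cochains on $\int\Omega^{1}_{M}$ compatibly with differentials, recognizing that the sesquilinearity axiom is precisely the passage to the $\delta$-reduced complex. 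This uses the duality $(\mathcal{L}^{+}F)^{\vee}\cong\int F^{\vee}$ and the polyderivation functoriality of $\int$ to identify the underlying graded objects, together with Arakawa's explicit formula for the induced $\lambda$-bracket on $\mathcal{L}^{+}A$. I expect this identification to be the main obstacle, as it is where the operadic definition of $C^{*}_{PV}$ from \cite{BdSHK} must be reconciled with the geometric de Rham--Lie differential.

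Granting the bridge, non-degeneracy finishes the algebraic part at once. Since $\pi$ is symplectic, $\pi^{\sharp}:\Omega^{1}_{M}\to\Theta_{M}$ is an isomorphism of $\mathcal{O}_{M}$-modules intertwining the Koszul bracket with the Lie bracket, hence an isomorphism of Lie algebroids $(\Omega^{1}_{M},[-,-]_{\pi},\pi^{\sharp})\cong\Theta_{M}$. Applying the functor $\int$, which by the Corollary preserves Lie algebroid structures, and invoking $\int\Theta_{M}\cong\Theta_{\mathcal{L}^{+}M}$, I obtain an isomorphism of Lie algebroids $\int\Omega^{1}_{M}\cong\Theta_{\mathcal{L}^{+}M}$. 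Since the de Rham--Lie complex of the tautological Lie algebroid $\Theta$ is the de Rham complex, the theorem reduces to computing the cohomology of the $\delta$-reduced continuous de Rham complex $\Omega^{\bullet,cont}_{\mathcal{L}^{+}M}/\delta$.

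The final step is this computation, a Poincaré lemma for the $\delta$-reduced de Rham complex. I would grade everything by conformal weight: the de Rham differential $d$ preserves weight while the $\delta$-action (Lie derivative along the canonical field $\delta_{M}$) raises it by one, so the image of $\delta$ lies in strictly positive weight and the reduced complex inherits a weight decomposition. Its weight-zero part is exactly the de Rham complex $\Omega^{\bullet}_{M}$ of the constant loops $M=(\mathcal{L}^{+}M)^{\delta}$, contributing $H^{*}_{dR}(M)$. It remains to show that the positive-weight part is acyclic. Before reduction this is immediate: with $E$ the Euler (grading) vector field, Cartan's formula $\mathcal{L}_{E}=d\iota_{E}+\iota_{E}d$ and $\mathcal{L}_{E}=w\cdot\mathrm{id}$ on weight $w$ give the contracting homotopy $\tfrac{1}{w}\iota_{E}$, so $\Omega^{\bullet,cont}_{\mathcal{L}^{+}M}$ is acyclic in each positive weight. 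To transfer this to the quotient I would use the relation $[\iota_{E},\delta]=\iota_{\delta_{M}}$ coming from $[E,\delta_{M}]=\delta_{M}$; the Euler homotopy does not descend directly to $\Omega^{\bullet}/\delta$ but only up to the correction term $\iota_{\delta_{M}}$, and the delicate point is to absorb this correction. This is handled either by the standard variational-complex homotopy operators or, more cleanly, by the long exact (Wang/cone) sequence for the chain endomorphism $\delta$ acting on $\Omega^{\bullet,cont}_{\mathcal{L}^{+}M}$: since $\delta$ acts as zero on the weight-zero cohomology $H^{*}_{dR}(M)$, analysis of the kernel and cokernel of $\delta$ on the de Rham complex pins down $H^{*}(\Omega^{\bullet,cont}_{\mathcal{L}^{+}M}/\delta)$.

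Assembling the three steps gives $H^{*}_{PV}(\mathcal{L}^{+}M,\mathcal{O})\cong H^{*}(\Omega^{\bullet,cont}_{\mathcal{L}^{+}M}/\delta)\cong H^{*}_{dR}(M)$, as claimed. I would organize the write-up so that Steps two and three are entirely local and formal once Step one is in place, matching the assertion that the framework makes the symplectic case an easy corollary; the genuine work is concentrated in the PV-to-Lie-algebroid identification.
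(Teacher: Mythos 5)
Your proposal is correct and follows the paper's own proof essentially step for step: the same bridge lemma identifying $C^{*}_{PV}(\mathcal{L}^{+}M,\mathcal{O})$ with $\mathcal{L}^{+}_{\delta}C^{*}_{dR,Lie}(M,\Omega^{1}_{M})$, the same reduction via the Lie algebroid isomorphism $\pi^{\sharp}\colon\Omega^{1}_{M}\to\Theta_{M}$ together with $\int\Theta_{M}\cong\Theta_{\mathcal{L}^{+}M}$ to the $\delta$-reduced de Rham complex, and the same conformal-weight/Euler-field argument at the end. If anything you are more careful than the paper at the final step: the paper asserts that the Cartan formula $[d_{dR},\iota_{\eta}]=Lie_{\eta}$ ``immediately'' yields the quasi-isomorphism on the quotient $\Omega^{*}(\mathcal{L}^{+}M)/Lie_{\delta}$, whereas, as you correctly observe, $\iota_{\eta}$ does not descend to that quotient (the correction term $[\iota_{\eta},Lie_{\delta}]=\iota_{\delta}$ is genuinely nonzero), so the homotopy must be repaired along the lines you indicate, e.g.\ via the variational-complex homotopy operators of the De Sole--Kac reference.
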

\section{Poisson Vertex Cohomology via Tate Lie Algebroid Cohomology}

We will now show how the formalism we developed for sheaves on $\mathcal{L}^{+}_{\delta}M$ can be used to describe the PVA complex on $\mathcal{L}^{+}M$ for a Poisson variety $(M,\pi)$. First, let us note that a Poisson structure on $M$ can be encoded as a Lie algebroid structure on the cotangent sheaf $\Omega^{1}_{M}$. The anchor map is $\pi:\Omega^{1}_{M}\rightarrow\Theta_{M}$ and bracket is defined locally by $\{da,db\}:=d\{a,b\}$. \\ \\By the results above we know then that the sheaf $\int\Omega^{1}_{M}\in Tate(\mathcal{L}^{+}_{\delta}M)$ is endowed with the structure of a topological Lie algebroid on $\mathcal{L}^{+}_{\delta}M$ with anchor $\int\pi:\int\Omega^{1}_{M}\rightarrow\int\Theta_{M}\cong\Theta_{\mathcal{L}^{+}_{\delta}M}$. Recall also that we have defined the loop de Rham Lie cohomology with coefficients in $L$ by $$\mathcal{L}^{+}_{\delta}C^{*}_{dR,Lie}(X,L):=C^{*,cont}_{dR,Lie}(\mathcal{L}^{+}X,\int L).$$  The following is now the key observation; \begin{lemma} There is a natural isomorphism of complexes of sheaves on $\mathcal{L}^{+}M$, $$\mathcal{L}_{\delta}^{+}C^{*}_{dR,Lie}(M,\Omega^{1}_{M})\cong C^{*}_{PV}(\mathcal{L}^{+}M,\mathcal{O}).$$\end{lemma}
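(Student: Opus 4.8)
The plan is to prove the isomorphism by first matching the underlying graded cochain spaces and then checking that the two differentials agree. Both complexes will turn out to be, up to the $\delta$-reduction, the continuous exterior algebra $\Lambda^{\bullet}_{\mathcal{L}^{+}A}\mathcal{L}^{+}\Theta_{M}$ of ``loop polyvector fields'', equipped with a Lichnerowicz--Poisson type differential; the real content is that the $\lambda$-calculus packaging the $C^{*}_{PV}$ differential is the same data as the Chevalley--Eilenberg differential of the Tate Lie algebroid $\int\Omega^{1}_{M}$. I would work locally, with $A=\mathcal{O}(X)$ Poisson, and globalize at the end, since every construction is local and natural in $A$.

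For the graded identification I would argue as follows. On the left, the de Rham--Lie complex of the Lie algebroid $\int\Omega^{1}_{M}$ has $n$-cochains the continuous alternating forms $\Lambda^{n}_{\mathcal{L}^{+}A}(\int\Omega^{1}_{M})^{\vee}$, and by the Tate duality lemma $(\int\Omega^{1}_{M})^{\vee}\cong\mathcal{L}^{+}\Theta_{M}$, so these are $\Lambda^{n}_{\mathcal{L}^{+}A}\mathcal{L}^{+}\Theta_{M}$, which one then reduces modulo $\delta$. On the right, a class in $C^{n}_{PV}$ is a polyderivation, hence a derivation in each argument; using that $\mathcal{L}^{+}A$ is generated as a $\delta$-algebra by $A$ (via $a_{i}=\delta^{(i)}(a_{0})$) together with sesquilinearity, such a cochain is determined by its restriction to degree-zero generators $a^{(1)},\dots,a^{(n)}\in A$, where it depends only on the $da^{(k)}$. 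The resulting data is a polyvector field valued in $\mathcal{L}^{+}A[\lambda_{1},\dots,\lambda_{n}]/(\delta+\sum_{k}\lambda_{k})$. The key dictionary, which I would establish using the explicit duality pairing $(\omega\otimes z^{j})\otimes\xi_{i}\mapsto\delta^{(i-j)}(\omega,\xi)$, is that the formal variables $\lambda_{k}$ record the conformal $z$-weights of the arguments in $\int\Omega^{1}_{M}$, while the quotient by $(\delta+\sum\lambda_{k})$ matches exactly the $\delta$-reduction on the de Rham--Lie side. This should yield the desired isomorphism of graded spaces.

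The bulk of the work, and the main obstacle, is matching the differentials. I would compare the explicit $C^{*}_{LC}$ differential, restricted to the polyderivation subcomplex, term by term with the Chevalley--Eilenberg differential of $\int\Omega^{1}_{M}$. Up to the standard sign conventions, the first sum $\sum_{i}(-1)^{i}[a_{i}\,_{\lambda_{i}}Y(\dots)]$ should correspond to the anchor contribution $\sum_{i}(-1)^{i}\rho(\ell_{i})\bigl(Y(\dots)\bigr)$, once one checks that the action of $\{a_{\lambda}-\}$ is precisely the integrated anchor $\int\pi$ acting with $z$-weight recorded by $\lambda$; the second sum, involving $Y([a_{i}\,_{\lambda_{i}}a_{j}]\otimes\cdots)$, should correspond to the bracket contribution $\sum_{i<j}(-1)^{i+j}Y([\ell_{i},\ell_{j}]\wedge\cdots)$. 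Here I would use that the Lie algebroid bracket on $\int\Omega^{1}_{M}$ was produced, via the polyderivation Lemma, from the Poisson cotangent bracket $[da,db]=d\{a,b\}$, and that the latter is exactly the data defining the $\lambda$-bracket $\{a_{\lambda}b\}=\{a,b\}$ of the PVA $\mathcal{L}^{+}A$.

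The delicate point is that $\int$ is only lax-monoidal and the Lie algebroid bracket is not $\mathcal{O}$-linear, so, as noted in the remark preceding the polyderivation Lemma, it is precisely the non-linear functoriality of $\int$ that controls how the integrated bracket distributes the $z$-weights. I expect the careful bookkeeping of these weights against the powers of $\lambda_{i}$ in the $C^{*}_{LC}$ formula, together with the sign matching and the verification that the integrated anchor and bracket are $\delta$-compatible (so that the differential descends to the $\delta$-reduction), to be where the essential labor lies. Once the two differentials agree under the dictionary of the second paragraph, the graded isomorphism upgrades to an isomorphism of complexes of sheaves, and the local identifications glue by naturality in $A$.
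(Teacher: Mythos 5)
Your proposal follows essentially the same route as the paper's proof: work locally, identify the underlying graded spaces via the Tate duality $(\int\Omega^{1}_{M})^{\vee}\cong\mathcal{L}^{+}\Theta_{M}$ together with the observation that sesquilinearity and the $\delta$-freeness of $\mathcal{L}^{+}A$ reduce a PVA cochain to its restriction to $A^{\otimes n}$ (with powers of $\lambda$ recording conformal weights, which is exactly the paper's map $\iota(\eta_{i})=\eta\otimes\lambda^{i}$), and then match the differentials term by term against the anchor and bracket of $\int\Omega^{1}_{M}$. The paper carries out the differential check explicitly only in degree $0$ (via Hamiltonians) and defers the higher degrees as routine, so your outline is correct and sits at essentially the same level of completeness.
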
 \begin{proof} Recall first that the graded sheaf of $\mathcal{L}^{+}M$-modules underlying $\mathcal{L}_{\delta}^{+}C^{*}_{dR,Lie}(M,\Omega^{1}_{M})$ is $sym_{\mathcal{O}_{\mathcal{L}^{+}M}}(\mathcal{L}^{+}\Theta_{M})$ using the duality between the functors $\int$ and $\mathcal{L}^{+}$. \\ \\ We now work locally and set $M=spec(A)$. Let us examine $n$-cochains in the PVA complex of $\mathcal{L}^{+}A:=B$. Such is given by a polyderivation $$Y_{\lambda_{1},...,\lambda_{n}}:B^{\otimes n}\longrightarrow B\,[\lambda_{1},...,\lambda_{n}]/(\delta+\sum\lambda_{i})$$ subject to the sesquilinearity and anti-symmetry conditions. The first thing to note is that freeness of the $\delta$-algebra $B$, combined with the sesquilinearity axiom, implies that $Y$ is determined entirely to its restriction to $A^{\otimes n}$. As such $Y$ can be encoded as an element of $$\Theta_{A}^{\otimes n}\otimes_{A}B[\lambda_{1},...,\lambda_{n}]/(\delta+\sum\lambda_{i}).$$ Further, the anti-symmetry condition translates into $Y$ giving an element of $\bigwedge^{n}(\Theta_{A}\otimes_{A}B[\lambda])/\delta$. Now note that there is a natural bijection of $A$-modules  $$\iota:\mathcal{L}^{+}\Theta_{A}\longrightarrow\Theta_{A}\otimes_{A}B[\lambda],$$ defined by $\iota(\eta_{i})=\eta\otimes\lambda^{i}$ for $\eta\in\Theta_{A}$. This extends to an isomorphism between the underlying graded $A$ modules of $\mathcal{L}_{\delta}^{+}C^{*}_{dR,Lie}(A,\Omega^{1}_{A})$ and $ C^{*}_{PV}(\mathcal{L}^{+}A,\mathcal{L}^{+}A)$. Further, this isomorphism intertwines the $\delta$-structures on both sides.\\ \\ We must show that this isomorphism intertwines the two differentials. Let us now see this at the $0$-th level. On the Poisson vertex side we have the map $b\mapsto\{b_{\lambda}-\}$ in degree $0$. On the loop de Rham Lie side we have the $0$th differential of the Lie algebroid $\int\Omega^{1}_{A}$. By definition this map is $$b\mapsto (\omega\mapsto (\int\pi)(\omega)(b)),$$ recalling that $\omega\in\int\Omega^{1}_{A}$ is defined to act on functions as $(\int\pi)(\omega)$. Unravelling this, one sees that under the identification $(\int\Omega^{1}_{A})^{\vee}=\mathcal{L}^{+}\Theta_{A}$, the differential takes the element $a_{i}$ to the element $(H_{a})_{i}\in\mathcal{L}^{+}\Theta_{A}$, where $H_{a}$ is the Hamiltonian associated to $a$. By definition the element $(H_{a})_{i}$ is sent to $H_{a}\otimes\lambda^{i}$ under $\iota$ and so we must simply observe that the definition of the PVA structure on $\mathcal{L}^{+}A$ implies that $\{a_{i}\,_{\lambda}-\}$ acts as $\lambda^{i}H_{a}$ on $A\subset\mathcal{L}^{+}A$. We now quotient by the action of $\delta$ and see that in the $0$-th degree the two differentials are intertwined. It is a similarly straightforward, if somewhat tedious, task to confirm the same for the higher differentials. \end{proof}

\begin{corollary}  Let $(M,\pi)$ be a smooth Poisson variety with non-degenerate $\pi$. Then there is an isomorphism: $H^{*}_{PV}(\mathcal{L}^{+}M,\mathcal{O})\cong H^{*}_{dR}(M).$\end{corollary}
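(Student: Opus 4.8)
The plan is to deduce the Corollary from the preceding key Lemma together with the classical symplectic identification of Lie algebroids and an explicit computation of the $\delta$-reduced de Rham cohomology of $\mathcal{L}^{+}M$. First I would use the key Lemma to replace $C^{*}_{PV}(\mathcal{L}^{+}M,\mathcal{O})$ by the loop de Rham--Lie complex $\mathcal{L}^{+}_{\delta}C^{*}_{dR,Lie}(M,\Omega^{1}_{M})$ of the Poisson (cotangent) Lie algebroid. Since $\pi$ is non-degenerate, the anchor $\pi:\Omega^{1}_{M}\rightarrow\Theta_{M}$ is an isomorphism of $\mathcal{O}_{M}$-modules, and it intertwines the brackets: the cotangent bracket $[da,db]=d\{a,b\}$ is carried to $[H_{a},H_{b}]$ by $\pi$, because $H_{\{a,b\}}=[H_{a},H_{b}]$. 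Hence $\pi$ is an isomorphism of Lie algebroids $\Omega^{1}_{M}\cong\Theta_{M}$. Applying $\int$ (which is functorial for Lie algebroid isomorphisms by the Corollary above) together with $\int\Theta_{M}\cong\Theta_{\mathcal{L}^{+}M}$, I obtain an isomorphism of complexes identifying $\mathcal{L}^{+}_{\delta}C^{*}_{dR,Lie}(M,\Omega^{1}_{M})$ with the $\delta$-reduction of the continuous de Rham--Lie complex of the tautological algebroid $\Theta_{\mathcal{L}^{+}M}$, that is, with the $\delta$-reduction of the honest (continuous) de Rham complex $\Omega^{*}_{\mathcal{L}^{+}M}$.

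It then remains to compute the hypercohomology of $\Omega^{*}_{\mathcal{L}^{+}M}/L_{\delta}$ and to show it is $H^{*}_{dR}(M)$. The main tool is the conformal grading coming from the $\mathbb{G}_{m}$-action. The de Rham differential $d$ preserves conformal weight while $L_{\delta}$ raises it by one, so $\mathrm{im}(L_{\delta})$ is weight-homogeneous and the quotient complex splits as a direct sum over weights $w\geq 0$. In weight $0$ the complex is precisely the de Rham complex $\Omega^{*}_{M}$ pulled back along the evaluation $p:\mathcal{L}^{+}M\rightarrow M$ (there being no $L_{\delta}$-contribution in weight $0$), and I would like to see that every summand of weight $w\geq 1$ is acyclic. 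Granting this, the quotient complex of sheaves is quasi-isomorphic to its weight-$0$ part; pushing forward along the (pro-)affine morphism $p$, which is exact on quasi-coherent sheaves, then gives $\mathbb{H}^{*}(\mathcal{L}^{+}M,\Omega^{*}_{\mathcal{L}^{+}M}/L_{\delta})\cong\mathbb{H}^{*}(M,\Omega^{*}_{M})=H^{*}_{dR}(M)$. Everything is Zariski-local on $M$ and the constructions glue, so I may carry out the argument on affines.

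The hard part will be the positive-weight acyclicity, which is a Poincar\'e-type lemma for the $\delta$-reduced de Rham complex of the arc space. The starting observation is that each $\Omega^{*}_{w}$ is itself acyclic for $w\geq 1$: writing $E$ for the Euler vector field of the conformal grading, Cartan's formula gives $L_{E}=d\iota_{E}+\iota_{E}d$, and since $L_{E}$ acts as the scalar $w$ on $\Omega^{*}_{w}$ the operator $\tfrac{1}{w}\iota_{E}$ is a contracting homotopy. The difficulty is to transport this homotopy across the quotient by $\mathrm{im}(L_{\delta})$. Here I would use the commutation relations on $\mathcal{L}^{+}M$: a direct computation gives $[\delta,E]=-\delta$ as vector fields, hence $[L_{\delta},\iota_{E}]=\iota_{[\delta,E]}=-\iota_{\delta}$, so that $\iota_{E}L_{\delta}=L_{\delta}\iota_{E}+\iota_{\delta}$. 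Thus $\iota_{E}$ fails to descend to the quotient, the defect being exactly $\iota_{\delta}$; on the other hand $\iota_{\delta}$ commutes with $L_{\delta}$ and so does descend, while $L_{\delta}$ itself becomes the zero operator modulo its image.

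To finish I expect the cleanest route is an induction on conformal weight using the long exact sequences attached to the short exact sequences of complexes $0\rightarrow\ker(L_{\delta})\rightarrow\Omega^{*}_{\bullet}\xrightarrow{L_{\delta}}\mathrm{im}(L_{\delta})\rightarrow 0$ and $0\rightarrow\mathrm{im}(L_{\delta})\rightarrow\Omega^{*}_{\bullet}\rightarrow\Omega^{*}_{\bullet}/L_{\delta}\rightarrow 0$. For $w\geq 2$ the acyclicity of $\Omega^{*}_{w}$ and of $\Omega^{*}_{w-1}$ reduces the acyclicity of the weight-$w$ quotient to that of the $\delta$-invariant de Rham complex $\ker(L_{\delta})$ in weight $w-1$, and the weight-one case can be checked by hand as the base of the induction. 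This homotopical bookkeeping, amounting to the Poincar\'e lemma for the $\delta$-invariant/$\delta$-reduced de Rham complex of the arc space, is the only non-formal point; modulo it the Corollary is immediate, and the computation is as "easy" as the abstract advertises once the weight decomposition is in place.
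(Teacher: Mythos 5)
Your reduction to the $\delta$-reduced de Rham complex is exactly the paper's own argument: the key Lemma, the Lie algebroid isomorphism $\pi:\Omega^{1}_{M}\rightarrow\Theta_{M}$, the identification $\int\Theta_{M}\cong\Theta_{\mathcal{L}^{+}M}$, the conformal weight decomposition, and the identification of the weight-$0$ part of the quotient with $\Omega^{*}(M)$. Your commutator bookkeeping is also correct, and here you are in fact \emph{more} careful than the paper: since $[\delta,\eta]=-\delta$ (writing $\eta$ for your $E$), one has $[L_{\delta},\iota_{\eta}]=-\iota_{\delta}$, and $\iota_{\eta}$ genuinely fails to preserve $\mathrm{im}(L_{\delta})$. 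Concretely, on $\mathcal{L}^{+}\mathbb{A}^{1}$ with coordinates $x_{0},x_{1},\dots$, taking $\alpha=x_{1}\,dx_{0}$ gives $\iota_{\eta}(L_{\delta}\alpha)=x_{1}^{2}$, while the image of $\delta$ in weight $2$ consists of the elements $g'(x_{0})x_{1}^{2}+2g(x_{0})x_{2}$, which never equal $x_{1}^{2}$. So the paper's claim that the Cartan formula "immediately" yields the quasi-isomorphism on the quotient complex is itself too quick, and you have correctly located the real difficulty of the computation.

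However, your proposed repair does not close, so the crucial step is genuinely missing. Write $I_{w}=L_{\delta}(\Omega^{*}_{w-1})$, $K_{w}=\ker(L_{\delta})\cap\Omega^{*}_{w}$, $Q_{w}=\Omega^{*}_{w}/I_{w}$. Your two exact sequences, together with the acyclicity of $\Omega^{*}_{w}$ and $\Omega^{*}_{w-1}$ (valid for $w\geq 2$), give $H^{n}(Q_{w})\cong H^{n+1}(I_{w})\cong H^{n+2}(K_{w-1})$; that is, acyclicity of $Q_{w}$ is \emph{equivalent} to acyclicity of $K_{w-1}$, by the very sequences you invoke. This is a reformulation, not a reduction: the inductive hypothesis (acyclicity of $Q_{w'}$ for $w'<w$, or the hand-checked $Q_{1}$) never enters, since $K_{w-1}$ bears no relation to lower-weight quotients — in particular $I_{w-1}\not\subseteq K_{w-1}$ because $L_{\delta}^{2}\neq 0$ (e.g.\ $x_{1}=\delta(x_{0})$ has $\delta(x_{1})=2x_{2}\neq 0$). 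So the induction has no step that uses its hypothesis, and it cannot terminate. What is actually required is a homotopy (or direct argument) for the quotient, equivalently kernel, complex in each positive weight; this is precisely the exactness theorem for the variational complex, which is a nontrivial result proved for instance in De Sole--Kac \cite{DSK} by constructing explicit homotopy operators (higher Euler operators), not by a formal Cartan-type identity. Until you either import that theorem or construct such an operator — say by correcting $\iota_{\eta}$ order by order using $\iota_{\delta}$ — the positive-weight acyclicity, which is the mathematical heart of the Corollary, remains unproved.
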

\begin{proof} Note that in this case the map $\pi$ provides an isomorphism of Lie algebroids, $\Omega^{1}_{M}\longrightarrow\Theta_{M}$. Such induces an isomorphism of topological Lie algebroids $$\int\Omega^{1}_{M}\longrightarrow\int\Theta_{M}.$$Further we recall that we have an isomorphism $\Theta_{\mathcal{L}^{+}M}\cong\int\Theta_{M}$. As such we are reduced to computing the loop de Rham Lie cohomology of the tangent Lie algebroid, ie to computing the hypercohomology of  $\mathcal{L}_{\delta}^{+}C^{*}_{dR,Lie}(M,\Theta_{M})$. First we note that this is simply the hypercohomology of the $\delta$-reduced de Rham complex on $\mathcal{L}^{+}M$, ie the hypercohomology of $\Omega^{*}(\mathcal{L}^{+}M)/Lie_{\delta}$ with the de Rham differential. We claim that the natural inclusion  $$\Omega^{*}(M)\longrightarrow \Omega^{*}(\mathcal{L}^{+}M)/Lie_{\delta}$$ is a quasi-isomorphism. To do this let us note that the $\mathbb{G}_{m}$-action produces an Euler vector field $\eta$ on $\mathcal{L}^{+}M$. This acts diagonalizably on the de Rham complex via Lie derivative, $Lie_{\eta}$. It satisfies $[Lie_{\eta},Lie_{\delta}]=Lie_{\delta}$ and so preserves the image of $\delta$, and thus descends to a diagonalizable endomorphism of the complex $\Omega^{*}(\mathcal{L}^{+}M)/Lie_{\delta}$. If $\iota_{\eta}$ denotes contraction against the vector field $\eta$, then the Cartan formula $[d_{dR},\iota_{\eta}]=Lie_{\eta}$ implies immediately that the inclusion of the conformal weight $0$ subspace is a quasi-isomorphism. Now we note that as $Lie_{\delta}$ has image entirely of conformal weight strictly positive, the weight $0$ subspace is indeed $\Omega^{*}(M)$, whence the claim is proven. \end{proof}


\begin{thebibliography}{9}
\bibitem{A}
T. Arakawa,
\textit{A remark on the $C_{2}$-cofiniteness condition on vertex algebras,}
Math. Z., 270 (1-2):559-575,2012
\bibitem{BdSHK}
B. Bakalov, A. De Sole, R. Heluani, V. Kac,
\textit{An Operadic Approach to Vertex Algebra and Poisson Vertex Algebra Cohomology,}
arXiv:1806.08754v1
\bibitem{BKV}
B. Bakalov, V. Kac, A. Voronov,
\textit{Cohomology of Conformal Algebras,}
Communications in Mathematical Physics 200 (1999), no. 3, 561-598

\bibitem{BD}
A. Beilinson, V. Drinfeld,
\textit{Chiral Algebras,}
AMS Colloquium Publications
Volume: 51; 2004; 375 pp
\bibitem{CM}
M. Crainic, I. Moerdijk,
\textit{Deformations of Lie brackets: cohomological aspects}
arXiv:math/0403434
\bibitem{DSK}
A. De Sole, V. Kac,
\textit{Lie conformal algebra cohomology and the variational complex,}
 Comm. Math. Phys. 313 (2012), no.3, 837-864


\bibitem{DF}
J. Denef, F. Loeser,
\textit{Geometry on arc spaces of algebraic varieties,}
arXiv:math/0006050

\bibitem{Dr}
V. Drinfeld, 
\textit{Infinite Dimensional Vector Bundles in Algebraic Geometry,}
The Unity of Mathematics. Progress in Mathematics, vol 244. Birkhäuser Boston 

\bibitem{MSV2} V. Gorbounov, F. Malikov, V. Schechtman,
\textit{Gerbes of Chiral Differential Operators,}
arXiv:math/9906117

\bibitem{H}
R. Heluani,
\textit{Florence Lecture Notes,}
available at http://w3.impa.br/~heluani/files/florence-lectures.pdf
\bibitem{MSV1}
F. Malikov, V. Schechtman, A. Vaintrob,
\textit{Chiral de Rham Complex,}
Commun.Math.Phys. 204 (1999) 439-473









\end{thebibliography}
\end{document}